\theoremstyle{plain}
\newtheorem{ex}{Example}[section]
\newtheorem{rem}{Remark}[section]
\newtheorem{thm}{Theorem}[section]
\newtheorem{lem}{Lemma}[section]
\newcommand{\E}{\mathbf{E}}
\title{ {\bf A Multiplicative Wavelet-based\\ Model  for Simulation\\  of a Random Process} }
\date{}
\begin{document}
\begin{center}
{\Large  \bf A Multiplicative Wavelet-based Model  for Simulation  of a Random Process}
\medskip

IEVGEN TURCHYN 

{\small 
University of Lausanne,  Lausanne, 
Switzerland 
} 

\end{center}






\noindent
{\it \small 
We consider a random process $Y(t)=\exp\{X(t)\}$, where $X(t)$  
is a centered second-order  process which correlation function 
$R(t,s)$ can be represented as \linebreak $\int_{\mathbb{R}} u(t,y)\overline{u(s,y)} dy.$
A multiplicative wavelet-based representation is found for $Y(t)$.  
We propose a model for simulation of the process $Y(t)$ 
and find its rates of convergence to the process in the spaces $C([0,T])$ 
and $L_p([0,T])$ for the case when 
$X(t)$ is a strictly sub-Gaussian process.  }






\begin{center}
{\bf Keywords} \ \    Sub-Gaussian random processes;      Simulation 

{\bf Mathematics Subject Classification} \ \  Primary 60G12 
\end{center}

\section{Introduction}
\label{sect-intro}


  Simulation of random processes is  a wide area nowadays,   
there exist many methods for simulation of stochastic processes (see e.g. \cite{Ogorodn, Ripley}).    

  But there exists one substantial problem:
  for most of traditional methods of simulation of random processes 
  it is difficult to measure the quality of approximation 
  of a process by its model in terms of ``distance" between paths of the process 
  and the corresponding paths of the model. 
  Therefore models for which such distance can be estimated are quite interesting.  
  
   There exists a concept for simulation  by such models which  
  is called si\-mu\-lation with given accuracy and reliability. 
%
Simulation with given accuracy and reliability is considered, for example, in \cite{Koz-Pogorilyak, Koz-Sottinen}.

  Simulation with given accuracy and reliability can be described in the following way. 
 An approximation $\hat{X}(t)$ of a random process $X(t)$ is built. 
The random process  $\hat{X}(t)$ is called a model of $X(t)$. 
A model depends on certain parameters.  
The rate of convergence of a model to a process is given by a statement of the following type: if numbers  $\delta$ (accuracy) and  $\varepsilon$ ($1-\varepsilon$ is called reliability) are given and the parameters of the model satisfy certain restrictions (for instance, they are not less than certain lower bounds) then 
\begin{equation}
\label{defi-acc-rel}
P\{\|X- \hat X \| >\delta \} \leq \varepsilon.  
\end{equation} 
Many such results have been proved for the cases when the norm in (\ref{defi-acc-rel}) is the $L_p$ norm or the uniform norm.  
But simulation with given accuracy and reliability has been developed so far almost only 
for processes which one-dimensional distributions have tails which  are 
not heavier than Gaussian tails (e.g. for sub-Gaussian processes).


 We consider a random process
$Y(t)=\exp\{X(t)\}$ and a 
scaling function~$\phi(x)$
with the corresponding wavelet $\psi(x)$, where $X(t)$ is a centered
second-order process such that its correlation function $R(t,s)$
can be represented as       
$$
R(t,s)=\int_{\mathbb{R}}u(t, \lambda)
\overline{u(s,\lambda)}d\lambda.
$$ 
We prove that 
$$
Y(t)=\prod\limits_{k \in \mathbb{Z}}
\exp\{\xi_{0k}a_{0k}(t) \}
\prod\limits_{ j=0}^{\infty} \prod\limits_{l \in \mathbb{Z}}
\exp\{\eta_{jl} b_{jl}(t) \},
$$
where $\xi_{0k}, \eta_{jl}$ are random variables, 
$a_{0k}(t), b_{jl}(t)$ are functions that depend on $X(t)$ and the wavelet.  

We take as a model of $Y(t)$ the process
$$
  \hat Y(t) 
  =
\prod \limits_{k=-(N_0-1)}^{N_0-1} 
\exp\{ \xi_{0k}a_{0k}(t)\} 
\prod\limits_{j=0}^{N-1}\prod\limits_{l=-(M_j-1)}^{M_j-1} \exp\{ \eta_{jl}b_{jl}(t)\}.  
$$

Let us consider the case when $X(t)$ is a sub-Gaussian process. 
Note that the class of processes $Y(t)=\exp\{X(t)\}$, where
$X(t)$ is a sub-Gaussian process,   
is a rich class which 
includes many processes which one-dimensional distributions 
have tails heavier than Gaussian tails, e.g. when $X(t)$ is a Gaussian process 
the one-dimensional distributions of $Y(t)$ are lognormal.  

We describe the rate of convergence
of $\hat Y(t)$ to a sub-Gaussian process $Y(t)$ in $C([0,T])$  in such a  way: 
if $\varepsilon \in (0; 1)$ and $\delta>0$ are given and the parameters $N_0, N, M_j $ are big enough then
\begin{equation}
\label{T-N-abstr}
\mathrm{P}\left\{\sup_{t \in [0,T]} |Y(t) / \hat{Y}(t) -1|>  \delta  \right\}
\leq \varepsilon.   
\end{equation}  
A similar statement which characterizes the rate of convergence
of $\hat Y(t)$ to~$Y(t)$ in $L_p([0,T])$ is also proved 
for the case when (\ref{T-N-abstr}) is  
replaced by the inequality 
$$
\mathrm{P}\left\{\left(\int_{0}^T |Y(t) - \hat{Y}(t)|^p dt\right)^{1/p}>  \delta  \right\}
\leq \varepsilon.   
$$  


If the process $X(t)=\ln Y(t)$  is Gaussian then   the model $\hat{Y}(t)$ can be used
 for computer simulation  of $Y(t)$. 

  One of the merits of our model is its simplicity. 
Besides, it can be used for simulation of processes which one-dimensional 
distributions have tails which are heavier than Gaussian tails.


\section{Auxiliary facts}


 A random variable $\xi$ is called {\it sub-Gaussian} if 
there exists such a constant $a \geq 0$ that 
$$
\mathsf{E} \exp\{\lambda \xi \} \leq \exp\{ \lambda^2 a^2 / 2 \}
$$
for all $\lambda \in \mathbb{R}$.  
 
The class of all sub-Gaussian random variables on a standard probability space 
$\{\Omega, \mathcal{B}, P\}$ is a Banach space with respect to the norm 
$$
\tau(\xi)=
\inf\{ a \geq 0:  
\mathsf{E}\exp\{\lambda \xi \} \leq \exp\{  \lambda^2 a^2/ 2 \} , \lambda \in \mathbb{R} \}.  
$$

 A centered Gaussian random variable and a random variable uniformly distributed on $[-b,b]$  are examples of sub-Gaussian random variables.

A sub-Gaussian random variable $\xi$ is  called  {\it strictly sub-Gaussian} if
$$
\tau(\xi) = (\mathsf {E} \xi^2)^{1/2}. 
$$ 

    For any sub-Gaussian random variable  $\xi$ 
\begin{equation}
\label{oc-expon-mom}
\mathsf{E}\exp\{\lambda \xi \} \leq \exp\{  \lambda^2 \tau^2(\xi)/ 2 \} , \  \lambda \in \mathbb{R},   
\end{equation}  
and 
\begin{equation}
\label{ots-mom-subg}
\mathsf{E} |\xi|^p \leq 2 \left(\frac{p}{e} \right)^{p/2} (\tau(\xi))^p, \ p>0. 
\end{equation}

A family $\Delta$ of sub-Gaussian random variables 
is called {\it strictly  sub-Gaussi\-an} if for any finite or countable  set $I$ of random variables $\xi_i \in \Delta$ and for any 
$\lambda_i \in \mathbb{R}$ 
$$
\tau^2 \left( \sum\limits_{i \in I} \lambda_i \xi_i  \right) =
 \mathsf{E} \left( \sum\limits_{i \in I} \lambda_i \xi_i \right)^2. 
$$
A stochastic process $X=\{X(t), t \in \mathbf{T} \}$ 
is called  {\it sub-Gaussian} if all the random variables 
$X(t), t \in \mathbf{T},$ 
are  sub-Gaussian.
We call a stochastic process \linebreak  $X  =\{X(t), t \in \mathbf{T} \}$ 
 {\it strictly sub-Gaussian} if the family
$\{X(t),  t \in \mathbf{T} \}$ 
is   strictly sub-Gaussian. Any centered Gaussian process is strictly sub-Gaussian.

%
%

Details about sub-Gaussian random variables and processes can be found in 
\cite{Buld-Koz-MetrChar}.

We will use wavelets (see \cite{Hern-Weiss} for details) for an expansion of a stochastic
process.  Namely, we  use a scaling function $\phi(x)$ of an MRA 
and the corresponding wavelet $\psi(x)$.  
Set  
$$
\phi_{0k}(x)= \phi(x - k), \quad   k \in \mathbb{Z}, 
$$ 
$$
\psi_{jl}(x)= 2^{j/2}\psi(2^{j}x - l), \quad  j,l \in \mathbb{Z}. 
$$     
We require orthonormality of the system $\{\phi(\cdot - k), \,  k \in \mathbb{Z} \}$. 
We denote by $\hat{f}$ the Fourier transform of a function $f \in L_2(\mathbb{R})$.

%

  The following statement is crucial for us.

\begin{thm}
\label{teor-rozkl1}
 (\cite{Koz-Tur-IJSMS})
 Let $X=\{X(t), t \in \mathbb{R}\}$ be centered
random process such that for all $t \in \mathbb{R}$  
$\,\,\, \mathsf{E} |X(t)|^2<\infty$. Let $R(t,s)=
\mathsf{E} X(t)\overline{X(s)}$ and there exists such a
Borel function $u(t,\lambda),$   $t \in \mathbb{R},
\lambda \in \mathbb{R}$ that 
\[
\int_{\mathbb{R}}|u(t,\lambda)|^2 d\lambda< \infty \quad
\text{for all} \quad t \in \mathbb{R}
\]
 and
\[
R(t,s)=\int_{\mathbb{R}}u(t, \lambda)
\overline{u(s,\lambda)}d\lambda.
\]
Let $\phi(x)$ be a scaling function, 
$\psi(x)$ --- the corresponding 
wavelet.   
 Then the process $X(t)$ can be presented as the  following series
 which converges for any $t \in \mathbb{R}$ in $L_2(\Omega)$: 
\begin{equation}
\label{rozkl1}
X(t)=\sum\limits_{k \in \mathbb{Z}}\xi_{0k}a_{0k}(t)+
\sum\limits_{j=0}^{\infty}\sum\limits_{l \in \mathbb{Z}}\eta_{jl}b_{jl}(t),
\end{equation}
where
\begin{equation}
\label{a-0k}
a_{0k}(t)=\frac{1}{\sqrt{2\pi}}
 \int_{\mathbb{R}} u(t,y)\overline{\hat \phi_{0k}(y)}dy=
\frac{1}{\sqrt{2\pi}}
 \int_{\mathbb{R}} u(t,y)\overline{\hat \phi(y)}e^{iyk}dy,
\end{equation}
\begin{equation}
\label{b-jk}
b_{jl}(t)=\frac{1}{\sqrt{2\pi}}
 \int_{\mathbb{R}} u(t,y)\overline{\hat \psi_{jl}(y)}dy=
\frac{1}{\sqrt{2\pi}}
 \int_{\mathbb{R}} u(t,y)2^{-j/2}\exp\Bigl\{\Bigr. i\frac{y}{2^j}l
 \Bigl\}\Bigr.
 \overline{\hat \psi \left(\frac{y}{2^j} \right)}dy,
\end{equation}
$\xi_{0k}, \eta_{jl}$ are centered random variables such that
\[
\mathsf{E} \xi_{0k}\overline{\xi_{0l}}=\delta_{kl}, \ 
\mathsf{E} \eta_{mk}\overline{\eta_{nl}}=\delta_{mn}\delta_{kl}, \ 
\mathsf{E} \xi_{0k}\overline{\eta_{nl}}=0 \, .
\]
\end{thm}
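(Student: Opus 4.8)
The plan is to exploit the fact that the hypothesis on $R$ exhibits the correlation function as a Gram matrix of vectors in a Hilbert space. Writing $H = L_2(\mathbb{R})$ and regarding $u(t,\cdot)$ as an element of $H$ (legitimate since $\int_{\mathbb{R}}|u(t,\lambda)|^2\,d\lambda < \infty$), the assumption reads $R(t,s) = \langle u(t,\cdot), u(s,\cdot)\rangle_H$. Simultaneously, centeredness and second-order integrability give $R(t,s) = \mathsf{E}X(t)\overline{X(s)} = \langle X(t), X(s)\rangle_{L_2(\Omega)}$. Thus the families $\{u(t,\cdot)\}_t$ in $H$ and $\{X(t)\}_t$ in $L_2(\Omega)$ have identical Gram structure, which is exactly the data needed to build a linear isometry between their closed linear spans. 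Everything in the theorem then follows by expanding $u(t,\cdot)$ in a conveniently chosen orthonormal basis of $H$ and transporting that expansion through the isometry.

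First I would produce the orthonormal basis. By the multiresolution analysis attached to $\phi$ and $\psi$, the system $\{\phi_{0k} : k \in \mathbb{Z}\} \cup \{\psi_{jl} : j \geq 0,\ l \in \mathbb{Z}\}$ is an orthonormal basis of $L_2(\mathbb{R})$, coming from the orthogonal decomposition $L_2(\mathbb{R}) = V_0 \oplus \bigoplus_{j \geq 0} W_j$. Since the Fourier transform is, up to the normalizing factor $1/\sqrt{2\pi}$, a unitary operator on $L_2(\mathbb{R})$, applying it shows that $\{(2\pi)^{-1/2}\hat\phi_{0k}\} \cup \{(2\pi)^{-1/2}\hat\psi_{jl}\}$ is again an orthonormal basis of $H$. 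Expanding $u(t,\cdot)$ in this basis and reading off the coefficients as the inner products $\langle u(t,\cdot), (2\pi)^{-1/2}\hat\phi_{0k}\rangle_H$ and $\langle u(t,\cdot), (2\pi)^{-1/2}\hat\psi_{jl}\rangle_H$ reproduces precisely the functions $a_{0k}(t)$ and $b_{jl}(t)$ of (\ref{a-0k}) and (\ref{b-jk}); Parseval's identity in $H$ then furnishes the consistency relation $R(t,s) = \sum_k a_{0k}(t)\overline{a_{0k}(s)} + \sum_{j,l} b_{jl}(t)\overline{b_{jl}(s)}$.

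It remains to manufacture the random variables. I would define the isometry $V_0$ from $H_0 := \overline{\mathrm{span}}\{u(t,\cdot) : t \in \mathbb{R}\} \subseteq H$ onto $\mathcal{H}_X := \overline{\mathrm{span}}\{X(t) : t \in \mathbb{R}\} \subseteq L_2(\Omega)$ by $V_0(u(t,\cdot)) = X(t)$; this is well defined and inner-product preserving precisely because the two Gram structures agree, so it extends by continuity to a genuine isometry of $H_0$. Extending $V_0$ to a linear isometry $V$ of all of $H$ (enlarging the underlying probability space when $H_0 \neq H$, so that there is room orthogonal to $\mathcal{H}_X$ to receive $H_0^{\perp}$), I would set $\xi_{0k} = V\bigl((2\pi)^{-1/2}\hat\phi_{0k}\bigr)$ and $\eta_{jl} = V\bigl((2\pi)^{-1/2}\hat\psi_{jl}\bigr)$. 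Because $V$ is an isometry and the preimages form an orthonormal basis, the relations $\mathsf{E}\xi_{0k}\overline{\xi_{0l}} = \delta_{kl}$, $\mathsf{E}\eta_{mk}\overline{\eta_{nl}} = \delta_{mn}\delta_{kl}$ and $\mathsf{E}\xi_{0k}\overline{\eta_{nl}} = 0$ hold automatically, and centeredness of $\xi_{0k},\eta_{jl}$ follows from that of $X$.

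The expansion (\ref{rozkl1}) is then obtained by applying the continuous operator $V$ term by term to the basis expansion of $u(t,\cdot)$: since that series converges in $H$ and $V$ preserves the norm, its image $\sum_k \xi_{0k}a_{0k}(t) + \sum_{j,l}\eta_{jl}b_{jl}(t)$ converges in $L_2(\Omega)$ to $V(u(t,\cdot)) = X(t)$, which is exactly the asserted $L_2(\Omega)$-convergence. The step I expect to be most delicate is the passage from the isometry $V_0$ defined only on $H_0$ to a global isometry $V$ whose values on the chosen basis are orthonormal: one must check that the probability space is rich enough, or can be enlarged, to carry an orthonormal image of $H_0^{\perp}$ without disturbing the action on $\{X(t)\}$. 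Once this is secured the orthonormality of the coefficients comes for free, and the remainder is a routine transport of a Hilbert-space expansion.
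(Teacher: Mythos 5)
Your proof is correct. Note that the paper itself gives no proof of Theorem \ref{teor-rozkl1} (it is quoted from \cite{Koz-Tur-IJSMS}), but your argument --- expanding $u(t,\cdot)$ in the orthonormal basis $\{(2\pi)^{-1/2}\hat\phi_{0k}\}\cup\{(2\pi)^{-1/2}\hat\psi_{jl}\}$ of $L_2(\mathbb{R})$ and transporting that expansion through the Gram-preserving isometry $u(t,\cdot)\mapsto X(t)$, i.e.\ the classical Karhunen-type construction --- is exactly the standard route taken in that reference, including the caveat you correctly flag about enlarging the probability space to host an orthonormal image of $H_0^{\perp}$ when $\overline{\mathrm{span}}\{u(t,\cdot)\}$ is a proper subspace of $L_2(\mathbb{R})$.
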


\noindent{\bf Definition.}
    Condition RC holds for stochastic process  
$X(t)$ if it satisfies the conditions of Theorem \ref{teor-rozkl1},
$u(t, \cdot) \in L_1(\mathbb{R})\cap L_2(\mathbb{R})$  
and inverse Fourier transform $\tilde{u}_{x}(t,x)$ of function 
$u(t,x)$ with respect to $x$ is a real function.    

\begin{rem}
Condition RC guarantees that
the coefficients $a_{0k}(t)$, $b_{jl}(t)$ of expansion $(\ref{rozkl1})$ are real.    
\end{rem}

Suppose that  $X(t)$ is a process which satisfies the conditions of 
Theorem~\ref{teor-rozkl1}.
Let us consider the following approximation (or model) of $X(t)$:  
$$
\hat X(t)=
\hat X(N_0, N, M_0, \ldots, M_{N-1}, t)
$$
\begin{equation}
\label{model-Xhat}
= 
\sum\limits_{k=-(N_0-1)}^{N_0-1} \xi_{0k}a_{0k}(t)+
\sum\limits_{j=0}^{N-1}\sum\limits_{l=-(M_j-1)}^{M_j-1}\eta_{jl}b_{jl}(t),  
\end{equation}
where  $\xi_{0k}, \eta_{jl}, a_{0k}(t),  b_{jl}(t)$ are defined in 
Theorem~\ref{teor-rozkl1}.   

Approximation of Gaussian and sub-Gaussian processes  by model (\ref{model-Xhat})  
has been studied in \cite{Koz-Tur-IJSMS} and  \cite{Turchyn-MCMA}. 

\begin{rem}
 If $X(t)$ is a Gaussian process then we can take as 
$\xi_{0k}, \eta_{jl}$ in~$(\ref{model-Xhat})$ independent random variables with  
distribution $N(0;1)$. 
\end{rem}

\section{A multiplicative representation}
  
   We will obtain a multiplicative representation for a wide class of stochastic processes.  
    
\begin{thm}
\label{thm-product-repr}
Suppose that a random process $Y(t)$
can be represented as  $Y(t)=\exp\{X(t)\}$, where the process $X(t)$ satisfies the conditions of Theorem~\ref{teor-rozkl1}. Then 
the equality 
\begin{equation}
\label{prod-expn}
Y(t)=\prod\limits_{k \in \mathbb{Z}}
\exp\{\xi_{0k}a_{0k}(t) \}
\prod\limits_{ j=0}^{\infty} \prod\limits_{l \in \mathbb{Z}}
\exp\{\eta_{jl} b_{jl}(t) \}
\end{equation}
holds, 
where product $(\ref{prod-expn})$ converges in probability for any fixed $t$ and 
$\xi_{0k}, \eta_{jl}, \linebreak  a_{0k}(t),  b_{jl}(t)$ are defined in  
Theorem~\ref{teor-rozkl1}. 
\end{thm}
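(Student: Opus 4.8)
The plan is to reduce the claim about the infinite product entirely to the $L_2(\Omega)$-convergence of the series furnished by Theorem~\ref{teor-rozkl1}, exploiting the fact that the exponential turns sums into products. The single structural observation that does all the work is that for \emph{finite} index sets the exponential of a sum is the product of the exponentials; hence the natural partial products of the right-hand side of (\ref{prod-expn}) are nothing but the exponentials of the partial sums of the series (\ref{rozkl1}). Concretely, for the truncation ranges used in the model (\ref{model-Xhat}) one has, for each fixed $t$,
\begin{equation*}
\prod\limits_{k=-(N_0-1)}^{N_0-1} \exp\{\xi_{0k}a_{0k}(t)\}
\prod\limits_{j=0}^{N-1}\prod\limits_{l=-(M_j-1)}^{M_j-1} \exp\{\eta_{jl}b_{jl}(t)\}
= \exp\{\hat X(N_0,N,M_0,\ldots,M_{N-1},t)\},
\end{equation*}
since $\exp\{a+b\}=\exp\{a\}\exp\{b\}$. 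So the partial products are $\exp$ applied to the partial sums $\hat X$, and it remains only to pass to the limit through $\exp$.

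First I would invoke Theorem~\ref{teor-rozkl1}, which states that for each fixed $t$ the partial sums converge to $X(t)$ in $L_2(\Omega)$. Since convergence in $L_2(\Omega)$ implies convergence in probability, the partial sums $\hat X(\cdot,t)$ converge to $X(t)$ in probability as the truncation parameters $N_0, N, M_0,\ldots,M_{N-1}$ tend to infinity (in the ordering dictated by (\ref{model-Xhat})). This is the only place where the hypotheses on $X(t)$ and its correlation kernel $u(t,\lambda)$ enter; everything downstream is a soft limiting argument.

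Next I would apply the standard fact that convergence in probability is preserved by a continuous mapping: if $Z_n \to Z$ in probability and $g$ is continuous, then $g(Z_n)\to g(Z)$ in probability. Taking $g=\exp$, which is continuous on $\mathbb{R}$, and combining with the identity from the first paragraph, the partial products converge in probability to $\exp\{X(t)\}=Y(t)$, which is exactly the assertion that the product (\ref{prod-expn}) converges in probability to $Y(t)$ for each fixed $t$.

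The only step requiring care is the last one, because $\exp$ is continuous but not uniformly continuous, so one cannot argue through a naive $\varepsilon$-$\delta$ estimate on the increments. I would handle this by the subsequence characterization of convergence in probability: every subsequence of $\hat X(\cdot,t)$ admits a further subsequence converging to $X(t)$ almost surely; along that a.s.-convergent subsequence continuity of $\exp$ gives $\exp\{\hat X\}\to\exp\{X(t)\}$ almost surely, hence every subsequence of the partial products has a further subsequence converging a.s. to $Y(t)$, which forces convergence in probability. This is the main (and essentially the only) obstacle, and it is resolved by this routine but necessary subsequence argument rather than by any quantitative bound.
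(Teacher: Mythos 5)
Your proof is correct and takes essentially the same route as the paper, which simply states that the result ``immediately follows from Theorem~\ref{teor-rozkl1}'': the partial products are the exponentials of the partial sums, $L_2(\Omega)$-convergence gives convergence in probability, and continuity of $\exp$ (handled, as you do, by the subsequence characterization) transfers this to the product. You have merely written out in full the routine details the paper leaves implicit.
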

The statement of the theorem immediately follows from Theorem~\ref{teor-rozkl1}.

\begin{rem}
   It was shown in \cite{Koz-Tur-IJSMS} that any  centered second-order wide-sense stationary process $X(t)$ which has the spectral density  
satisfies the conditions of Theorem~\ref{teor-rozkl1}.      
  The process $Y(t)=\exp\{X(t)\}$ can be represented as product (\ref{prod-expn}) and therefore the class of processes which satisfy the conditions of 
  Theorem~\ref{thm-product-repr} is wide enough.  
\end{rem}

 It is natural to approximate a stochastic process
$Y(t)=\exp\{X(t)\}$ 
which satisfies the conditions of
Theorem~\ref{thm-product-repr}  
by the  model       
$$
  \hat Y(t) 
  =
\hat Y(N_0, N, M_0, \ldots, M_{N-1}, t)  
$$
\begin{equation}
\label{model-Yhat}
=
\prod \limits_{k=-(N_0-1)}^{N_0-1} 
\exp\{ \xi_{0k}a_{0k}(t)\} 
\prod\limits_{j=0}^{N-1}\prod\limits_{l=-(M_j-1)}^{M_j-1} \exp\{ \eta_{jl}b_{jl}(t)\}
   = \exp\{\hat X(t)\}.  
\end{equation} 

\begin{rem}
 If $X(t)= \ln Y(t)$ is a Gaussian process then we can use the model 
$\hat Y(t)$ for computer simulation of $Y(t)$,   
taking as  $\xi_{0k}, \eta_{jl}$ in (\ref{model-Yhat}) independent random variables with  
distribution $N(0;1)$. 
\end{rem}

\section{Simulation with given relative  accuracy  \\ and reliability  in $C([0,T])$}

Let us study the rate of convergence in $C([0,T])$ of 
 model (\ref{model-Yhat})  to a process~$Y(t)$. 
We will need several auxiliary facts.

\begin{lem} 
(\cite{Turchyn-MCMA}) 
\label{lem-ocen-koef}
  Let $X=\{X(t), t \in \mathbb{R} \}$ be a centered stochastic process which satisfies  the requirements of Theorem \ref{teor-rozkl1},  
$T>0$,  $\phi$ be a  sca\-ling function, $\psi$ 
 be the corresponding wavelet, the function $\hat\phi(y)$ be absolutely continuous on any interval,
the function $u(t,y)$ be absolutely continuous with respect to $y$
for any fixed~$t$, there exist the derivatives
$u_{\lambda}'(t,\lambda),\hat{\phi}'(y), \hat{\psi}'(y)$ 
and  
$|\hat{\psi}'(y)| \leq C,\,\,\, 
|u(t, \lambda)| \leq |t| u_1(\lambda), 
|u_{\lambda}'(t,\lambda)| \leq  |t| \, u_2(\lambda),$
\begin{equation}
\label{u1-conds}
\int_{\mathbb{R}} u_1(y) |y| dy < \infty, \quad
\int_{\mathbb{R}} u_1(y) dy < \infty, \quad
\int_{\mathbb{R}} u_1(y) |\hat{\phi}'(y)|dy < \infty,    
\end{equation}
\begin{equation}
\label{u1-u2-conds}
\int_{\mathbb{R}} u_1(y) |\hat{\phi}(y)| dy< \infty,
\quad
\int_{\mathbb{R}} u_2(y) |y| dy < \infty, \quad
\int_{\mathbb{R}} u_2(y) |\hat{\phi}(y)| dy < \infty,
\end{equation}
\[
\lim \limits_{|y| \rightarrow \infty} 
u(t,y) \, \overline{\hat{\psi}(y/2^j)}=0 
\quad  \forall j=0,1,\ldots 
\quad \forall t \in [0,T]
\]
and
\[
\lim \limits_{|y| \rightarrow \infty} 
u(t,y) \hat{\phi}(y)=0 \quad \forall t \in [0,T],
\]
\[
 E_1=\frac{\textstyle 1}{\textstyle \sqrt{2\pi} }	
\int_{\mathbb{R}} u_1(y) |\widehat{\phi}(y)| dy, 
\]
\[
 E_2=\frac{\textstyle 1}{\textstyle \sqrt{2\pi}}
\left( \int_{\mathbb{R}} u_1(y) |\widehat{\phi}'(y)| dy +
\int_{\mathbb{R}} u_2(y) |\widehat{\phi}(y)| dy \right),
\]
$$
F_1=\frac{\textstyle C}{\textstyle \sqrt{2\pi}} 
\int_{\mathbb{R}} u_1(y) |y| dy, 
$$
$$
F_2=\frac{\textstyle C}{\textstyle \sqrt{2\pi}}
 \int_{\mathbb{R}} (u_1(y)+|y|u_2(y))dy.
$$
Let the process $\hat{X}(t)$ be defined by 
$(\ref{model-Xhat})$,  
$\delta>0$. If 
$N_0,\, N,\,M_j  \,  (j=0,1,  \ldots 
\linebreak \ldots, N-1)$ satisfy the  inequalities 
\[
N_0>\frac{6}{\delta}E_2^2 T^2 +1,
\]
\[
N>\max\left\{ 1+\log_{2}\left(\frac{72 F_2^2 T^2}{5 \delta}\right),
1+\log_{8}\left(\frac{18 F_1^2 T^2}{7 \delta}\right) \right\},
\]
\[
M_j>1+\frac{12}{\delta}F_2^2 T^2, 
\]
then
\begin{equation}
\label{Otkl-ot-mod}
\sup\limits_{t \in [0,T] } \mathsf{E}|X(t)-\widehat{X}(t)|^2 \leq \delta.
\end{equation}

\end{lem}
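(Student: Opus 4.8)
The plan is to reduce everything to an estimate on the deterministic coefficients. Since $X(t)-\hat X(t)$ is precisely the tail of the series (\ref{rozkl1}) that was discarded in forming the model (\ref{model-Xhat}), and since the random variables $\xi_{0k},\eta_{jl}$ are orthonormal in $L_2(\Omega)$ by Theorem~\ref{teor-rozkl1}, the cross terms vanish and
\[
\mathsf{E}|X(t)-\hat X(t)|^2=\sum_{|k|\ge N_0}|a_{0k}(t)|^2+\sum_{j=0}^{N-1}\sum_{|l|\ge M_j}|b_{jl}(t)|^2+\sum_{j=N}^{\infty}\sum_{l\in\mathbb{Z}}|b_{jl}(t)|^2.
\]
Thus it suffices to bound each of these three tails by $\delta/3$ uniformly in $t\in[0,T]$, and the three parameter restrictions on $N_0$, $M_j$, $N$ will be calibrated to do exactly this.

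For the first tail I would use the representation (\ref{a-0k}), in which $a_{0k}(t)$ is the value at the integer $k$ of the Fourier integral of $g_t(y)=u(t,y)\overline{\hat\phi(y)}$. Integrating by parts once in $y$, using the vanishing of the boundary term guaranteed by $\lim_{|y|\to\infty}u(t,y)\hat\phi(y)=0$ and the absolute continuity of $\hat\phi$, converts the oscillatory factor $e^{iyk}$ into a factor $1/k$; estimating $g_t'$ by $|u_\lambda'(t,y)|\le|t|u_2(y)$ and $|u(t,y)|\le|t|u_1(y)$ together with (\ref{u1-conds})--(\ref{u1-u2-conds}) gives $|a_{0k}(t)|\le TE_2/|k|$. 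Then $\sum_{|k|\ge N_0}1/k^2\le 2/(N_0-1)$ yields the bound $2T^2E_2^2/(N_0-1)$, which is below $\delta/3$ precisely under the stated hypothesis $N_0>6E_2^2T^2/\delta+1$.

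The coefficients $b_{jl}(t)$ require two complementary estimates obtained from (\ref{b-jk}). Integration by parts against the oscillatory factor $\exp\{i(y/2^j)l\}$ produces a factor $2^j/l$ and, after differentiating $u(t,y)\overline{\hat\psi(y/2^j)}$ and invoking the wavelet bound $|\hat\psi(y/2^j)|\le C|y|/2^j$ (valid because $\hat\psi(0)=0$ and $|\hat\psi'|\le C$), yields the decay-in-$l$ estimate $|b_{jl}(t)|\le 2^{-j/2}TF_2/|l|$ for $l\ne0$; the direct modulus estimate of the same integral, again through $|\hat\psi(y/2^j)|\le C|y|/2^j$, yields the decay-in-$j$ estimate $|b_{jl}(t)|\le 2^{-3j/2}TF_1$ valid for every $l$. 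For the middle tail ($j\le N-1$) I sum the first estimate over $|l|\ge M_j$, using $\sum_{|l|\ge M_j}1/l^2\le 2/(M_j-1)$ and $\sum_{j\ge0}2^{-j}=2$, which stays below $\delta/3$ under $M_j>1+12F_2^2T^2/\delta$. For the high-level tail ($j\ge N$) I cannot discard any $l$, so I split off $l=0$ (controlled by $|b_{j0}(t)|\le 2^{-3j/2}TF_1$) from $l\ne0$ (controlled by the $1/|l|$ bound, with $\sum_{l\ne0}1/l^2$ summing to a constant), and then evaluate the geometric series $\sum_{j\ge N}2^{-3j}$ and $\sum_{j\ge N}2^{-j}$; matching these to $\delta/3$ produces exactly the two logarithmic lower bounds on $N$ (base $8$ for the $F_1$ term, base $2$ for the $F_2$ term).

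The main obstacle is the high-level tail $\sum_{j\ge N}\sum_{l}|b_{jl}(t)|^2$, because there the integration-by-parts device that produced decay in $l$ is useless for the central coefficient $l=0$ and, more importantly, does not by itself deliver decay in $j$. The decisive structural input is that the wavelet has zero integral, i.e. $\hat\psi(0)=0$, which upgrades the crude bound $|\hat\psi(y/2^j)|\le\sup|\hat\psi|$ to $|\hat\psi(y/2^j)|\le C|y|/2^j$ and thereby injects the extra factor $2^{-j}$ into every level; this is what renders the series over $j$ summable. Some care is also needed to verify that all boundary terms in the integrations by parts vanish, which is exactly the role of the two limit hypotheses on $u(t,y)\hat\phi(y)$ and $u(t,y)\overline{\hat\psi(y/2^j)}$. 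Summing the three contributions gives (\ref{Otkl-ot-mod}).
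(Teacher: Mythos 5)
Your proposal is correct and follows essentially the same route as the paper: the paper relegates this lemma's proof to \cite{Turchyn-MCMA}, but its own proof of the analogous $L_p$ result (Lemmas \ref{p6lem-ocen-koef-rozkl1-nestats} and \ref{nas-TN-subg-nestats-rozk1}) uses precisely your scheme --- the orthonormality decomposition of $\mathsf{E}|X(t)-\hat X(t)|^2$ into the three coefficient tails, followed by integration-by-parts bounds $|a_{0k}(t)|\le T E_2/|k|$, $|b_{jl}(t)|\le 2^{-j/2}T F_2/|l|$ and the direct bound $|b_{j0}(t)|\le 2^{-3j/2}T F_1$ exploiting $\hat\psi(0)=0$. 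One small imprecision: the stated threshold on $N$ does not make the high-level $l\ne 0$ piece ``exactly'' $\delta/6$ or $\delta/3$ (with $\sum_{l\ne 0}l^{-2}=\pi^2/3$ it comes to $\tfrac{5\pi^2}{216}\delta\approx 0.23\delta$), but together with the $l=0$ piece ($<\delta/18$) the high-level tail is still below $\delta/3$, so your argument does verify the lemma as stated.
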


%
%

\begin{lem}
\label{lem-ocen-mod-nepr}
(\cite{Turchyn-MCMA})
  Let $X=\{X(t), t \in \mathbb{R} \}$ be a centered stochastic process which satisfies the requirements of Theorem \ref{teor-rozkl1}, 
$T>0$, $\phi$ be a sca\-ling function, $\psi$ be the correspon\-ding
 wavelet, $S(y)=\overline{\hat\psi(y)},\,\, 
S_{\phi}(y)=\overline{\hat\phi(y)}$; 
$\phi(y),  u(t, \lambda), S(y),  S_{\phi}(y)$ satisfy such conditions: the function $u(t,y)$ is absolutely continuous with
respect to $y$, the function $\hat\phi(y)$ is absolutely continuous,
\[
 |S'(y)|\leq M <\infty,
\] 
\[
\lim \limits_{|y| \rightarrow \infty} 
u(t,y) S(y/2^j)=0, 
\quad j=0,1,\ldots, \quad t \in [0,T],
\]
\[
\lim \limits_{|y| \rightarrow \infty} 
u(t,y) S_{\phi}(y)=0, \quad  t \in [0,T] ,
\]
there exist functions $v(y)$ and $w(y)$ such that
\[
|u'_{y}(t_1,y)-u'_{y}(t_2,y)|\leq |t_2-t_1|v(y),
\]
\[
|u(t_1,y)-u(t_2,y)|\leq |t_2-t_1| w(y)
\]  
and  
\[
\int_{\mathbb{R}} |y|v(y)dy< \infty , \quad
 \int_{\mathbb{R}} v(y) |S_{\phi}(y)|dy< \infty ,
\]
\[
\int_{\mathbb{R}} w(y)|S'_{\phi}(y)|dy< \infty,
\quad
\int_{\mathbb{R}}w(y)dy< \infty, 
\]
\[
\int_{\mathbb{R}}w(y)|y|dy< \infty, \quad
\int_{\mathbb{R}} w(y)|S_{\phi}(y)|dy< \infty;
\]     
$a_{0k}(t)$ and $b_{jl}(t)$ are defined by equalities   
$(\ref{a-0k})$ and $(\ref{b-jk})$,
\[
A^{(1)}=\frac{1}{\sqrt{2\pi}} \left( \int_{\mathbb{R}}
 v(y)|S_{\phi}(y)|dy + 
 \int_{\mathbb{R}} w(y)|S'_{\phi}(y)|dy \right),
\]
\[
B^{(0)}=\frac{M}{\sqrt{2\pi}} \int_{\mathbb{R}}w(y)|y|dy,
\]
\[
B^{(1)}=\frac{M}{\sqrt{2\pi}}
 \int_{\mathbb{R}}(w(y)+|y|v(y))dy,
\]
\[
C_{\Delta X}
=\sqrt{\frac{2(A^{(1)})^2}{N_0-1} +
\frac{(B^{(0)})^2}{7\cdot 8^{N-1}} + 
\frac{(B^{(1)})^2}{2^{N-3}} + 
(B^{(1)})^2 \sum\limits_{j=0}^{N-1}\frac{1}{2^{j-1}(M_j-1)}
} \ .
\]

  Then for   $t_1, t_2 \in [0,T]$ and  $N>1, N_0>1,M_j>1$ the inequality
\[
\sum\limits_{|k|\geq N_0}|a_{0k}(t_1)-a_{0k}(t_2)|^2 +
\sum\limits_{j \geq N}\sum\limits_{l \in \mathbb{Z}} |b_{jl}(t_1)-b_{jl}(t_2)|^2 
\]
\begin{equation}
\label{ner-ocen-mod-nepr}
+\sum\limits_{j=0}^{N-1}\sum\limits_{|l|\geq M_j}|b_{jl}(t_1)-b_{jl}(t_2)|^2
\leq C_{\Delta X}^2 (t_2-t_1)^2
\end{equation}
holds.
\end{lem}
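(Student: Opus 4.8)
The plan is to observe that the increments of the coefficients have exactly the same shape as the coefficients of Theorem~\ref{teor-rozkl1} themselves, only with $u(t,\cdot)$ replaced by the increment $\Delta u(y):=u(t_1,y)-u(t_2,y)$. Indeed, from $(\ref{a-0k})$ and $(\ref{b-jk})$ one has
\[
a_{0k}(t_1)-a_{0k}(t_2)=\frac{1}{\sqrt{2\pi}}\int_{\mathbb{R}}\Delta u(y)\,\overline{\hat\phi(y)}\,e^{iyk}\,dy,
\]
\[
b_{jl}(t_1)-b_{jl}(t_2)=\frac{2^{-j/2}}{\sqrt{2\pi}}\int_{\mathbb{R}}\Delta u(y)\,\overline{\hat\psi(y/2^j)}\,e^{iyl/2^j}\,dy .
\]
The hypotheses supply Lipschitz majorants $|\Delta u(y)|\le|t_2-t_1|\,w(y)$ and $|\partial_y\Delta u(y)|\le|t_2-t_1|\,v(y)$, so $w$ and $v$ will play here the roles that $u_1$ and $u_2$ played in Lemma~\ref{lem-ocen-koef}, with a factor $|t_2-t_1|$ appearing throughout; this is why $A^{(1)},B^{(0)},B^{(1)}$ are built from $v,w$ exactly as the constants of Lemma~\ref{lem-ocen-koef} were built from $u_1,u_2$. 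Since these are deterministic estimates, the whole argument mirrors the proof of Lemma~\ref{lem-ocen-koef}.

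The first key step is to produce two pointwise estimates. For the scaling coefficients I would integrate by parts once in $y$ (legitimate by the absolute continuity of $u(t,\cdot)$ and $\hat\phi$); the boundary terms vanish by $\lim_{|y|\to\infty}u(t,y)S_\phi(y)=0$, and differentiating $\Delta u(y)\overline{\hat\phi(y)}$ produces two pieces bounded by $v|S_\phi|$ and $w|S_\phi'|$, giving $|a_{0k}(t_1)-a_{0k}(t_2)|\le A^{(1)}|t_2-t_1|/|k|$. For the wavelet coefficients the fact I would exploit is the vanishing moment of the wavelet: since $\hat\psi(0)=0$ and $|S'(y)|=|\hat\psi'(y)|\le M$, one gets $|\hat\psi(y/2^j)|\le M|y|/2^j$. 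Combined with integration by parts in $y$ (boundary terms vanishing now by $\lim_{|y|\to\infty}u(t,y)S(y/2^j)=0$) this yields, for $l\ne0$,
\[
|b_{jl}(t_1)-b_{jl}(t_2)|\le\frac{2^{-j/2}B^{(1)}\,|t_2-t_1|}{|l|},
\]
whereas estimating the same integral directly, using only $|\hat\psi(y/2^j)|\le M|y|/2^j$, gives the $l$-uniform bound $|b_{jl}(t_1)-b_{jl}(t_2)|\le 2^{-3j/2}B^{(0)}|t_2-t_1|$.

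With these in hand the three sums are assembled by elementary series estimates. For $\sum_{|k|\ge N_0}$ I use the $1/|k|$ bound with $\sum_{|k|\ge N_0}k^{-2}\le 2/(N_0-1)$, producing $2(A^{(1)})^2/(N_0-1)$. For $\sum_{j=0}^{N-1}\sum_{|l|\ge M_j}$ I use the $1/|l|$ bound with $\sum_{|l|\ge M_j}l^{-2}\le 2/(M_j-1)$ and sum the resulting $2^{-j}$ factors, yielding $(B^{(1)})^2\sum_{j=0}^{N-1}\bigl(2^{j-1}(M_j-1)\bigr)^{-1}$. For the high-level block $\sum_{j\ge N}\sum_{l}$ I split each inner sum into $l=0$ and $l\ne0$: the isolated $l=0$ term is controlled by the uniform bound, contributing $(B^{(0)})^2\sum_{j\ge N}8^{-j}=(B^{(0)})^2/(7\cdot 8^{N-1})$, while the $l\ne0$ part is controlled by the $1/|l|$ bound with $\sum_{l\ne0}l^{-2}\le 4$, contributing $(B^{(1)})^2\sum_{j\ge N}4\cdot 2^{-j}=(B^{(1)})^2/2^{N-3}$. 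Adding the four contributions gives exactly $C_{\Delta X}^2(t_2-t_1)^2$.

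The main obstacle is the high-level block $\sum_{j\ge N}\sum_{l}$, where the inner sum runs over all translations: integration by parts alone fails because it is useless at $l=0$, and the uniform bound alone fails because it is not summable in $l$. The resolution, and the crux of the argument, is that the vanishing moment of the wavelet supplies the extra factor $2^{-j}$ in the uniform bound, which is precisely what renders the isolated $l=0$ term summable in $j$ through the geometric series in $8^{-j}$, while the remaining $l\ne0$ terms are dispatched by integration by parts. Everything else is bookkeeping that parallels Lemma~\ref{lem-ocen-koef}.
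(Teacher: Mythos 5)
Your proof is correct, and it matches the argument that the paper relies on: the paper itself states this lemma without proof, citing \cite{Turchyn-MCMA}, and the exact form of the constants $A^{(1)}, B^{(0)}, B^{(1)}$ and of $C_{\Delta X}$ (the $2/(N_0-1)$ and $2/(M_j-1)$ tail sums, the $1/(7\cdot 8^{N-1})$ geometric series from the $l=0$ terms, the $1/2^{N-3}$ from $\sum_{l\neq 0} l^{-2}\le 4$) shows that your reconstruction --- integration by parts for the $1/|k|$, $1/|l|$ decay, the vanishing moment $\hat\psi(0)=0$ for the $2^{-3j/2}$ uniform bound, then elementary series estimates --- is precisely the intended one. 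No gaps: the boundary terms in your integrations by parts vanish by the stated limit hypotheses, and the integrability conditions on $v,w$ justify every step.
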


\begin{lem}
\label{lem-ocen-C-DeltaX}

  If 
$$
  N_0 \geq 1 + \frac{8 (A^{(1)})^2}{\varepsilon^2}, 
$$
$$
  N \geq \max\left\{1+\log_8\frac{4 (B^{(0)})^2}{7 \varepsilon^2}, \ 
               3+\log_2\frac{4 (B^{(1)})^2}{\varepsilon^2}   \right\}, 
$$
$$
  M_j \geq 1+ 16 \frac{(B^{(1)})^2}{\varepsilon^2}     
$$
then 
$$
C_{\Delta X} \leq \varepsilon,  
$$ 
where  $A^{(1)}, B^{(0)}, B^{(1)}, C_{\Delta X}$ are defined 
in Lemma~\ref{lem-ocen-mod-nepr}.  
\end{lem}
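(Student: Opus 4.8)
The plan is to exploit the fact that, by its definition in Lemma~\ref{lem-ocen-mod-nepr}, $C_{\Delta X}^2$ is a sum of four nonnegative terms. I would split the target $\varepsilon^2$ into four equal pieces and bound each term by $\varepsilon^2/4$; the three hypotheses on $N_0$ and $N$ are tailored exactly to the first three terms, while the hypothesis on $M_j$ together with a geometric-series estimate controls the fourth. Adding the four bounds gives $C_{\Delta X}^2 \leq \varepsilon^2$, and taking square roots yields the claim.

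First I would handle the term $\frac{2(A^{(1)})^2}{N_0-1}$. The hypothesis $N_0 \geq 1 + 8(A^{(1)})^2/\varepsilon^2$ is equivalent to $N_0 - 1 \geq 8(A^{(1)})^2/\varepsilon^2$, which rearranges directly to $\frac{2(A^{(1)})^2}{N_0-1} \leq \varepsilon^2/4$. The two branches of the lower bound on $N$ are designed to control the second and third terms: the condition $N \geq 1 + \log_8\bigl(4(B^{(0)})^2/(7\varepsilon^2)\bigr)$ is equivalent to $7 \cdot 8^{N-1} \geq 4(B^{(0)})^2/\varepsilon^2$, hence $\frac{(B^{(0)})^2}{7 \cdot 8^{N-1}} \leq \varepsilon^2/4$; and $N \geq 3 + \log_2\bigl(4(B^{(1)})^2/\varepsilon^2\bigr)$ is equivalent to $2^{N-3} \geq 4(B^{(1)})^2/\varepsilon^2$, hence $\frac{(B^{(1)})^2}{2^{N-3}} \leq \varepsilon^2/4$. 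Taking the maximum of the two branches, as the hypothesis does, ensures both estimates hold at once. Here the only mild point to keep straight is matching the base-$8$ and base-$2$ logarithmic conditions to the respective denominators $8^{N-1}$ and $2^{N-3}$.

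The one term requiring more than a one-line rearrangement is the fourth, $(B^{(1)})^2 \sum_{j=0}^{N-1} \frac{1}{2^{j-1}(M_j-1)}$. I would use $M_j \geq 1 + 16(B^{(1)})^2/\varepsilon^2$, which gives $\frac{1}{M_j - 1} \leq \frac{\varepsilon^2}{16(B^{(1)})^2}$ uniformly in $j$, and factor this out of the sum. The remaining factor is the geometric sum $\sum_{j=0}^{N-1} 2^{-(j-1)} = 2\sum_{j=0}^{N-1} 2^{-j} = 4(1 - 2^{-N}) < 4$, so the fourth term is at most $(B^{(1)})^2 \cdot \frac{\varepsilon^2}{16(B^{(1)})^2} \cdot 4 = \varepsilon^2/4$.

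Combining the four estimates yields $C_{\Delta X}^2 \leq 4 \cdot \varepsilon^2/4 = \varepsilon^2$, whence $C_{\Delta X} \leq \varepsilon$. I do not expect any genuine obstacle, since every step is elementary algebra; the only subtleties worth a remark are the logarithm-to-denominator bookkeeping above and the implicit assumption $B^{(1)} > 0$, without which the terms carrying the factor $(B^{(1)})^2$ simply vanish and the bound is immediate.
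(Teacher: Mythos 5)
Your proof is correct, and it is precisely the argument the paper has in mind: the paper omits the proof as trivial, and the constants $8$, $4/7$, $4$, and $16$ in the hypotheses are tailored exactly to your four-way splitting of $C_{\Delta X}^2$ into terms each bounded by $\varepsilon^2/4$, with the geometric sum $\sum_{j=0}^{N-1} 2^{-(j-1)} < 4$ handling the last term. Nothing further is needed.
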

 We omit the proof due to its triviality.

\noindent{\bf Definition.} 
  We say that a  model $\hat Y(t)$
approximates a stochastic process~$Y(t)$ with given {\it relative accuracy} $\delta$
and {\it reliability} $1-\varepsilon$ (where $\varepsilon \in (0; 1)$)  in $C([0,T])$ if 
$$
\mathrm{P}\left\{\sup_{t \in [0,T]} |Y(t) / \hat{Y}(t) -1|>  \delta  \right\}
\leq \varepsilon.   
$$

Now we can formulate a result on the 
rate of convergence in  $C([0,T])$. 

\begin{thm}
\label{Main-thm}
  Suppose that a random process 
  $Y=\{Y(t),  t \in \mathbb{R}\}$
  can be represented as  $Y(t)=\exp\{X(t)\}$, where a separable strictly sub-Gaussian random process 
$X=\{X(t),  t \in \mathbb{R}\}$ is mean square continuous, satisfies the condition RC and the conditions of
Lemmas \ref{lem-ocen-koef} and \ref{lem-ocen-mod-nepr} together with a 
\linebreak  scaling function $\phi$ and the corresponding  wavelet $\psi$, the random variables
 $\xi_{0k}, \eta_{jl}$ in expansion $(\ref{rozkl1})$ of the process $X(t)$ are independent strictly sub-Gaussian, $\hat X(t)$ is a model of  $X(t)$ defined by $(\ref{model-Xhat})$,    
 $\hat Y(t)$  is defined by $(\ref{model-Yhat})$,  
 $\theta  \in (0;1)$, $\delta  >0$, $\varepsilon \in (0;1)$, $T>0$, the  numbers $A^{(1)}, B^{(0)}, B^{(1)}, E_2, F_1, F_2$ are defined in Lemmas \ref{lem-ocen-koef} and
\ref{lem-ocen-mod-nepr},  
$$
\hat \varepsilon= \delta \sqrt{\varepsilon}, 
$$
$$
A(\theta)= \int_{1/(2 \theta)}^{\infty} \frac{\sqrt{v+1}}{v^2}dv,   
$$
$$
\tau_1= \frac{e^{1/2}\,  \hat\varepsilon}
{  2^{7/4}(64 + \hat\varepsilon^2)^{1/4} }, 
$$
$$
\tau_2= (32 \ln(1+\hat\varepsilon^2/60))^{1/2}, 
$$
$$
\tau_3= \sqrt{\ln(1+\hat\varepsilon^3/8)} \Bigl /\Bigr.  \sqrt{2}, 
$$
$$
\tau_{*}= \min \{\tau_1,  \tau_2, \tau_3 \},   
$$
$$
Q=  
\frac{ e^{1/2}\hat\varepsilon \, \theta (1-\theta)  } 
 { 2^{9/4} A(\theta) T (1+ \hat\varepsilon^3/8) },      
$$
$$
  N_0^{*} = 1 + \frac{8 (A^{(1)})^2}{Q^2}, 
$$
$$
  N^* =  \max\left\{1+\log_8\frac{4 (B^{(0)})^2}{7 Q^2}, \ 
               3+\log_2\frac{4 (B^{(1)})^2}{Q^2}   \right\}, 
$$
$$  
M^* =  1+ 16 \frac{(B^{(1)})^2}{Q^2},       
$$
$$
N_0^{**} = \frac{6}{\tau_*^2}E_2^2 T^2 +1,
$$
$$
N^{**} = \max\left\{ 1+\log_{2}\left(\frac{72 F_2^2 T^2}{5 \tau_*^2}\right),
1+\log_{8}\left(\frac{18 F_1^2 T^2}{7 \tau_*^2}\right) \right\},
$$
$$
M^{**}  = 1+\frac{12}{\tau_*^2}F_2^2 T^2.  
$$

Suppose also that 
\begin{equation}
\label{SUP-TAU}
\sup\limits_{t \in [0,T]} \mathsf{E}(X(t)-\hat X(t))^2>0.
\end{equation}

  If 
\begin{equation}
\label{LB-N0}
N_0> \max\{N_0^{\ast}, N_0^{**} \}, 
\end{equation}
\begin{equation}
\label{LB-N}
N>\max\{N^{\ast}, N^{**} \},
\end{equation}
\begin{equation}
\label{LB-MJ}
M_j>\max\{M^{\ast}, M^{**} \} 
\quad  (j=0,1,\ldots,N-1),
\end{equation}
then the model $\hat Y(t)$ approximates 
the process $Y(t)$ with given relative accuracy $\delta$ and 
reliability $1-\varepsilon$ in $C([0,T])$. 
%
\end{thm}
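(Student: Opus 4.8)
The plan is to pass to the ``remainder'' process and turn the relative-accuracy statement into a single second-moment estimate, which the constants $\tau_*$, $Q$, $A(\theta)$ are precisely engineered to deliver. Write $\Delta(t)=X(t)-\hat X(t)$; by $(\ref{model-Xhat})$ and $(\ref{model-Yhat})$ we have $Y(t)/\hat Y(t)=\exp\{\Delta(t)\}$, and since $\hat X$ is the partial sum of the expansion $(\ref{rozkl1})$ in the very same variables $\xi_{0k},\eta_{jl}$, the process $\Delta$ is the tail of that expansion, hence again centered, mean-square continuous and \emph{strictly} sub-Gaussian. Strict sub-Gaussianity is the feature I would lean on, because it identifies $\tau(\Delta(t))$ with $(\mathsf{E}|\Delta(t)|^2)^{1/2}$ and $\tau(\Delta(t_1)-\Delta(t_2))$ with $(\mathsf{E}|\Delta(t_1)-\Delta(t_2)|^2)^{1/2}$, so the $L_2$-type estimates of the lemmas become estimates of the sub-Gaussian norm. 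I would then apply Chebyshev's inequality,
$$
\mathrm{P}\left\{\sup_{t\in[0,T]}|\exp\{\Delta(t)\}-1|>\delta\right\}\leq\frac{1}{\delta^2}\,\mathsf{E}\sup_{t\in[0,T]}|\exp\{\Delta(t)\}-1|^2,
$$
so that, because $\hat\varepsilon=\delta\sqrt{\varepsilon}$, the whole theorem reduces to the single inequality
$$
\mathsf{E}\sup_{t\in[0,T]}|\exp\{\Delta(t)\}-1|^2\leq\hat\varepsilon^2.
$$

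Next I would harvest two quantitative facts about $\Delta$ from the auxiliary lemmas. Feeding Lemma~\ref{lem-ocen-koef} the accuracy value $\tau_*^2$ in place of its $\delta$, the hypotheses $N_0>N_0^{**}$, $N>N^{**}$, $M_j>M^{**}$ yield $\sup_t\mathsf{E}|\Delta(t)|^2\leq\tau_*^2$, hence $\sup_t\tau(\Delta(t))\leq\tau_*$. Feeding Lemma~\ref{lem-ocen-C-DeltaX} (applied to the increment sums of Lemma~\ref{lem-ocen-mod-nepr}) the value $Q$ in place of its $\varepsilon$, the hypotheses $N_0>N_0^{*}$, $N>N^{*}$, $M_j>M^{*}$ yield $C_{\Delta X}\leq Q$, so by $(\ref{ner-ocen-mod-nepr})$ and strict sub-Gaussianity $\tau(\Delta(t_1)-\Delta(t_2))\leq Q\,|t_2-t_1|$ on $[0,T]$. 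Thus $\Delta$ is a strictly sub-Gaussian process of pointwise size at most $\tau_*$ which is $Q$-Lipschitz in its own sub-Gaussian metric; condition $(\ref{SUP-TAU})$ guarantees that this process is non-degenerate, i.e. $\sup_t\tau(\Delta(t))>0$, which is what makes the normalization in the forthcoming supremum estimate legitimate.

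The heart of the argument, and the step I expect to be the main obstacle, is the supremum estimate for $V(t):=\exp\{\Delta(t)\}-1$. The difficulty is that $V$ is \emph{not} sub-Gaussian, so no chaining bound can be applied to it directly; instead I would transfer the two properties of $\Delta$ to $V$ using the sub-Gaussian exponential moments $(\ref{oc-expon-mom})$. Pointwise, $\mathsf{E}V(t)^2=\mathsf{E}\exp\{2\Delta(t)\}-2\,\mathsf{E}\exp\{\Delta(t)\}+1\leq\exp\{2\tau_*^2\}-1$, and the requirement that this not exceed $\hat\varepsilon^3/8$ is exactly $\tau_*\leq\tau_3$, since $\exp\{2\tau_3^2\}-1=\hat\varepsilon^3/8$. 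For the increments one writes $V(t_1)-V(t_2)=\exp\{\Delta(t_2)\}(\exp\{\Delta(t_1)-\Delta(t_2)\}-1)$ and estimates $\mathsf{E}(V(t_1)-V(t_2))^2$ by Cauchy--Schwarz together with $(\ref{oc-expon-mom})$, producing a bound of the form $(\mathrm{const})\,Q^2|t_1-t_2|^2$; the two auxiliary thresholds $\tau_*\leq\tau_1$ and $\tau_*\leq\tau_2$ are precisely the conditions (the factors $(64+\hat\varepsilon^2)$ and $\ln(1+\hat\varepsilon^2/60)$ betraying the fourth-moment Cauchy--Schwarz bookkeeping) that keep the prefactors under control. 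With a usable $L_2$-modulus of continuity for $V$ in hand, I would run a Dudley-type entropy/chaining estimate over $[0,T]$, whose covering numbers in the metric $Q|t_1-t_2|$ generate the integral $A(\theta)=\int_{1/(2\theta)}^{\infty}\frac{\sqrt{v+1}}{v^2}\,dv$, the free parameter $\theta\in(0,1)$ splitting the diameter and producing the factor $\theta(1-\theta)$; the Lipschitz constant enters linearly, which is exactly why $Q$ is defined with $A(\theta)$, $T$ and $\theta(1-\theta)$ in its denominator.

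Finally I would collect the estimates: the chaining bound expresses $\mathsf{E}\sup_t V(t)^2$ as the sum of a pointwise term controlled by $\tau_*=\min\{\tau_1,\tau_2,\tau_3\}$ and an oscillation term controlled by $Q$ through $A(\theta)$, and the definitions of $\tau_*$ and $Q$ are calibrated so that each term is bounded by a fixed fraction of $\hat\varepsilon^2$, giving $\mathsf{E}\sup_t|\exp\{\Delta(t)\}-1|^2\leq\hat\varepsilon^2$ and hence, via the Chebyshev step, the desired relative accuracy $\delta$ and reliability $1-\varepsilon$. The genuinely delicate part is the constant-tracking in this last combination: the whole point of the three separate thresholds $\tau_1,\tau_2,\tau_3$ and of the particular shape of $Q$ is that they are the minimal requirements making the chaining inequality close up to exactly $\hat\varepsilon^2$, and verifying this requires patient but routine estimation of the absolute constants rather than any new idea.
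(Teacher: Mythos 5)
Your proposal follows essentially the same route as the paper's proof: the paper likewise passes to $U(t)=\exp\{\Delta X(t)\}-1$, bounds $\mathrm{P}\{\sup_{t\in[0,T]}|U(t)|>\delta\}$ by $S_2^2/\delta^2$ via Theorem 3.3.3 of Buldygin--Kozachenko (which packages your ``Chebyshev plus Dudley-type chaining'' step, with $S_2=\sup_t(\mathsf{E}|U(t)|^2)^{1/2}+\frac{1}{\theta(1-\theta)}\int_0^{\theta\varepsilon_0}N^{1/2}(u)\,du$), gets $\tau_{\Delta X}\le\tau_*$ from Lemma \ref{lem-ocen-koef} with the double-starred thresholds and $C_{\Delta X}\le Q$ from Lemma \ref{lem-ocen-C-DeltaX} with the starred ones, proves the Lipschitz bound $\|U(t_1)-U(t_2)\|_2\le C_U|t_1-t_2|$ with $C_U=(2^{9/4}/e^{1/2})\,C_{\Delta X}\exp\{2\tau_{\Delta X}^2\}$, and closes with $N(u)\le TC_U/(2u)+1$, which generates exactly the integral $A(\theta)$. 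The only discrepancies are bookkeeping ones: the paper uses $|e^a-e^b|\le|a-b|(e^a+e^b)$ with Cauchy--Schwarz and sub-Gaussian fourth moments (rather than your factorization of the increments), and it assigns $\tau_1,\tau_2$ to the pointwise bound $(\mathsf{E}|U(t)|^2)^{1/2}\le\hat\varepsilon/2$ and $\tau_3$ to absorbing the factor $\exp\{2\tau_{\Delta X}^2\}\le 1+\hat\varepsilon^3/8$ into the definition of $Q$ --- the reverse of your assignment --- which is immaterial since all three bounds $\tau_*\le\tau_1,\tau_2,\tau_3$ are simultaneously available.
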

\begin{proof}

  Denote
$$
\Delta X(t) = X(t) - \hat X(t), 
$$
$$
U(t)= Y(t)/\hat Y(t) - 1 = 
 \exp\{\Delta X(t)\} - 1, 
$$
$$
\rho_U(t,s) = \|U(t)-U(s) \|_{L_2(\Omega)}, 
$$
$$
\tau_{\Delta X}= \sup_{t \in [0,T]} \tau(\Delta X(t)). 
$$
Let us note that $\rho_U$ is a pseudometric. 
Let $N(u)$ be the metric massiveness of $[0,T]$ with respect to $\rho_U$, 
i.e. the minimum number of closed balls in the space $([0,T], \rho_U)$ with diameters at most $2u$ needed to cover~$[0,T]$, 
$$
\varepsilon_0=\sup_{t,s \in [0,T]}\rho_U(t,s).
$$ 
 We will denote the norm in $L_2(\Omega)$
 as $\|\cdot\|_2$ below.

Since $U(t) \in L_2(\Omega), t \in [0,T],$  we obtain using Theorem 3.3.3  from \cite{Buld-Koz-MetrChar} (see p.~98)
\begin{equation}
\label{EQ-sup-U}
\mathrm{P}\left\{ \sup_{t \in [0,T]} 
|U(t)|>\delta  \right\} \leq  \frac{S_2^2}{\delta^2}, 
\end{equation}   
where  
$$
S_2=  \sup_{t \in [0,T]}(\mathsf{E}|U(t)|^2)^{1/2}   + 
\frac{1}{\theta (1-\theta)} \int_{0}^{\theta \varepsilon_0} N^{1/2}(u)du. 
$$

 We will prove that 
 $S_2 \leq \delta \sqrt{\varepsilon} = \hat \varepsilon$.

  First of all let us estimate $\E|U(t)|^2$, where $t \in [0,T]$.  
  
 Using the inequality
\begin{equation}
\label{razn-exp}
|e^a - e^b| \leq |a-b| \max\{e^a, e^b \} \leq  |a-b| (e^a + e^b)
\end{equation}    
(we set $b=0$)
 and Cauchy-Schwarz inequality we obtain  
$$
\mathsf{E}|U(t)|^2 =\E (\exp\{\Delta X(t)\} - 1)^2 
\leq
(\E |\Delta X(t)|^4)^{1/2}   
(\E (\exp\{\Delta X(t)\} + 1)^4)^{1/2}.   
$$
  
It follows from  (\ref{ots-mom-subg}) that  
\begin{equation}
\label{E-Delta-Xv4}
\E |\Delta X(t)|^4 \leq \frac{32}{e^2}\, 
\tau_{\Delta X}^4.  
\end{equation}

Let us estimate 
$G = \E (\exp\{\Delta X(t)\} + 1)^4$. 
Since 
$$
\E \exp\{k \Delta X(t)\} 
\leq \exp\{k^2 \tau^2(\Delta X(t))/2 \} 
=A^{k^2}\leq A^{16}, \ 1\leq k \leq 4,
$$
where $A=\exp\{\tau_{\Delta X}^2 /2 \}$, 
we have 
\begin{equation}
\label{ineq-for-G}
G \leq \sum\limits_{k=1}^{4} 
{4 \choose k} A^{16} + 1 = 
15 A^{16} + 1. 
\end{equation}

  It follows from Lemma \ref{lem-ocen-koef}  and (\ref{LB-N0})--(\ref{LB-MJ})   that
\begin{equation}
\label{oc-tau-DeltaX}
\tau_{\Delta X} =
\sup_{t \in [0,T]} \mathsf{E}(|\Delta X(t)|^2)^{1/2} \leq \tau_{*}.
\end{equation}
Using (\ref{E-Delta-Xv4})--(\ref{oc-tau-DeltaX}) we obtain    
\begin{equation}
\label{nervo-E-U-sq}
(\mathsf{E}|U(t)|^2)^{1/2} 
\leq \hat \varepsilon/2.  
\end{equation}

  Let us estimate now 
$$   
I(\theta)=  \frac{1}{\theta (1-\theta)} \int_{0}^{\theta \varepsilon_0} N^{1/2}(u)du. 
$$

 At first we will find an upper bound for 
$N(u)$. In order to do this we will prove that 
\begin{equation}
\label{oc-razn-U}
\|U(t_1)- U(t_2) \|_{2} \leq 
C_U |t_1 - t_2| ,  
\end{equation}
where
$$
C_U = (2^{9/4}/e^{1/2}) 
C_{\Delta X} \exp\{2 \tau_{\Delta X}^2\}, 
$$ 
 $C_{\Delta X}$ is defined in Lemma~\ref{lem-ocen-mod-nepr}. 

We have, using (\ref{razn-exp})
and Cauchy-Schwarz inequality: 
$$
\|U(t_1)- U(t_2) \|_{2}^2 
= 
\E |\exp\{\Delta X(t_1)\} - \exp\{\Delta X(t_2) \} |^2  
$$
$$
\leq  \E |\Delta X(t_1) - \Delta X(t_2)|^2  
(\exp\{\Delta X(t_1)\} + \exp\{\Delta X(t_2) \} )^2
$$
$$
\leq 
(\E (\Delta X(t_1) - \Delta X(t_2))^4)^{1/2}   
(\E(\exp\{\Delta X(t_1)\} + \exp\{\Delta X(t_2)\})^4  )^{1/2}.  
$$

Applying (\ref{ots-mom-subg}), we obtain  
\begin{equation}
\label{oc-razn-v-4j}
(\E (\Delta X(t_1) - \Delta X(t_2))^4)^{1/2}   
\leq (2^{5/2}/e) C_{\Delta X}^2 |t_2 - t_1|^2. 
\end{equation}

Let us find an upper bound for 
$$
H= \E(\exp\{\Delta X(t_1)\} + \exp\{\Delta X(t_2)\})^4. 
$$

Since 
$$
\E\exp\{k \Delta X(t_1) + l \Delta X(t_2)\} 
$$
$$
\leq 
\exp\{\tau^2 ( k \Delta X(t_1) + l \Delta X(t_2) )/2 \}
\leq 
\exp\{ ( k \tau(\Delta X(t_1)) + l \tau(\Delta X(t_2)) )^2/2 \} 
$$
$$
\leq  \exp\{ 8 \tau_{\Delta X}^2\},
$$
where $k+l=4$, we have: 
\begin{equation}
\label{oc-H}
H \leq \sum_{k=0}^4 {4 \choose k} 
\exp\{ 8 \tau_{\Delta X}^2\} =
 16 \exp\{ 8 \tau_{\Delta X}^2\}   
\end{equation}
and (\ref{oc-razn-U}) follows from 
(\ref{oc-razn-v-4j}) and 
(\ref{oc-H}).

  Using inequality (\ref{oc-razn-U}),   	
 simple properties of metric entropy 
 (see \cite{Buld-Koz-MetrChar}, Lemma 3.2.1, p.~88) and the inequality 
 $$
 N_{\rho_1}(u) \leq T/(2u) + 1 
 $$
 (where $N_{\rho_1}$ is the entropy of $[0, T]$ with respect to the Euclidean  metric)   
  we have 
$$
N(u) \leq \frac{T C_U}{2u} + 1.  
$$   
Since   
$\varepsilon_0\leq C_U T$ we obtain      
$$
\int_{0}^{\theta \varepsilon_0}  
N^{1/2}(u) du \leq 
\int_{0}^{\theta \varepsilon_0}  
\left( T C_U/(2u) + 1  \right)^{1/2}
 du 
$$
\begin{equation}
\label{oc-entr-int}
= \frac{T C_U}{2} \int_{T C_U/(2 \theta \varepsilon_0)}^{\infty} 
\frac{\sqrt{v+1}}{v^2} dv   \leq  T C_{U} A(\theta)/2.  
\end{equation}

It is easy to check using 
Lemma \ref{lem-ocen-C-DeltaX}   
that under the conditions of the theorem  
the inequality  
\begin{equation}
\label{eq-oc-C}
C_{\Delta X} \leq Q
\end{equation}
holds. 
It follows from (\ref{oc-tau-DeltaX})  and 
(\ref{eq-oc-C})
that 
$$
C_U \leq \frac{\hat \varepsilon \,  \theta (1-\theta)}{T A(\theta)} 
$$
and therefore using (\ref{oc-entr-int})
we obtain 
\begin{equation} 
\label{nervo-I-theta}
I(\theta) \leq \hat \varepsilon/2. 
\end{equation}

  Now the statement of the theorem follows from 
(\ref{EQ-sup-U}), (\ref{nervo-E-U-sq}) and  (\ref{nervo-I-theta}).       
\end{proof}

\begin{ex} {\rm
  Let us consider a function 
$u(t,\lambda)=
t/(1+t^2+\lambda^2)^4$ 
and an arbitrary Dau\-be\-chies wavelet (with the corresponding 
scaling function $\phi$ and \linebreak the wavelet $\psi$). We will use the notations
\[
a_{0k}(t)=\frac{1}{\sqrt{2\pi}}\int_{\mathbb{R}}
u(t,y)\overline{\hat\phi_{0k}(y)}dy, \quad
b_{jl}(t)=\frac{1}{\sqrt{2\pi}}\int_{\mathbb{R}}
u(t,y)\overline{\hat\psi_{jl}(y)}dy
\]
and consider the stochastic process  
\[
X(t)=\sum_{k \in \mathbb{Z}}\xi_{0k}a_{0k}(t)+
\sum_{j=0}^{\infty}\sum_{l \in \mathbb{Z}}\eta_{jl}b_{jl}(t),
\]
where $\xi_{0k}, \eta_{jl} \,\,(k, l \in \mathbb{Z}, j=0, 1, \ldots)$ are 
independent   uniformly distributed over $[-\sqrt{3}, \sqrt{3}]$. 
It is easy to see that the process $Y(t)=\exp\{X(t)\}$ and the Daubechies wavelet satisfy the conditions of Theorem \ref{Main-thm}.
}
\end{ex}


\section{Simulation with given accuracy \\  and reliability in $L_p([0,T])$}

   Now we will consider the rate of convergence in $L_p([0,T])$ of model (\ref{model-Yhat}) to a
 process $Y(t)$.

%

\begin{lem}
\label{p6lem-ocen-koef-rozkl1-nestats} 
  Suppose that a centered stochastic process 
$X=\{X(t),  \linebreak t \in \mathbb{R} \}$  satisfies the conditions of  
 Theorem~\ref{teor-rozkl1}, $\phi$ is a  scaling function, $\psi$ is the corresponding  wavelet,
$\hat{\phi}$ and $\hat{\psi}$ are Fourier transforms of 
$\phi$ and $\psi$ respectively, $\hat\phi(y)$ is absolutely 
continuous, $u(t,y)$ is defined in  Theorem~\ref{teor-rozkl1} and $u(t,y)$ is absolutely 
continuous for any fixed $t$, there exist derivatives  
$u_{y}'(t,y), \hat{\phi}'(y),  \hat{\psi}'(y)$ and  
$|\hat\psi'(y)| \leq C$, 
$|u(t, y)|  \leq u_1(y),
|u_{y}'(t,y)| \leq |t| \, u_2(y), $
equalities $(\ref{u1-conds})$ and $(\ref{u1-u2-conds})$ hold, 
$$
\lim \limits_{|y| \rightarrow \infty} 
u(t,y) \, \overline{\hat{\psi}(y/2^j)}=0 
\,\,\,  \forall j=0,1,\ldots 
\,\,\, \forall t \in \mathbb{R},
$$
$$
\lim \limits_{|y| \rightarrow \infty} 
u(t,y) |\hat{\phi}(y)|=0 \,\,\, \forall t \in \mathbb{R} ;
$$
$$
S_1=\frac{1}{\sqrt{2\pi}}
 \int _{\mathbb{R}} u_1(y) |\hat{\phi}'(y)| d y, \quad 
S_2=\frac{1}{\sqrt{2\pi}} 
\int _{\mathbb{R}} u_2(y) |\hat{\phi}(y)| d y,
$$
$$
Q_1=\frac{C}{\sqrt{2\pi}} \int _{\mathbb{R}} u_1(y) d y, \quad
Q_2=\frac{C}{\sqrt{2\pi}} \int _{\mathbb{R}} u_2(y) |y| d y.
$$
  
  Then the following inequalities hold for the coefficients 
$a_{0k}(t), b_{jl}(t)$ in expansion $(\ref{rozkl1})$  
of the process $X(t)$: 
\begin{equation}
\label{9}
|a_{00}(t)| \leq \frac{1}{ \sqrt{2\pi} }	
\int _{\mathbb{R}} u_1(y) |\hat{\phi}(y)| d y,
\end{equation} 
\begin{equation}
\label{8}
|b_{j0}(t)| \leq \frac{C}{ \sqrt{2\pi} \, 2^{3j/2} }	
\int _{\mathbb{R}} u_1(y) |y| d y, \quad j=0,1, \ldots , 
\end{equation}
\begin{equation}
\label{6}
  |a_{0k}(t)| \leq \frac{S_1+S_2 |t|}{|k|}, \quad k \neq 0,	
\end{equation}
\begin{equation}
\label{7}
|b_{jl}(t)| \leq \frac{Q_1+Q_2 |t|}{2^{j/2} |k|},	
\quad k \neq 0, \quad j=0,1, \ldots 	
\end{equation}
\end{lem}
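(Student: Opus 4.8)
The plan is to handle the four estimates in two groups: the bounds (\ref{9}) and (\ref{8}) for the central coefficients $a_{00}(t)$ and $b_{j0}(t)$, which follow by direct majorization of the integrand, and the bounds (\ref{6}) and (\ref{7}) for $k\neq0$ (resp.\ $l\neq0$), which require exploiting the oscillatory exponential factor through one integration by parts in $y$.

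For (\ref{9}) I would simply take absolute values inside the defining integral (\ref{a-0k}) with $k=0$ and use $|u(t,y)|\leq u_1(y)$, which gives the claim at once. For (\ref{8}) the decay factor $2^{-3j/2}$ and the weight $|y|$ must be extracted from the wavelet. Here I would invoke the standard vanishing-mean property $\hat\psi(0)=0$; combined with the Lipschitz bound $|\hat\psi'(y)|\leq C$ this yields $|\hat\psi(y/2^j)|=|\hat\psi(y/2^j)-\hat\psi(0)|\leq C|y|2^{-j}$. Substituting this into (\ref{b-jk}) with $l=0$ (which already carries the factor $2^{-j/2}$) and using $|u(t,y)|\leq u_1(y)$ produces exactly $C\,2^{-3j/2}/\sqrt{2\pi}$ times $\int_{\mathbb{R}} u_1(y)|y|\,dy$.

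The heart of the argument is the treatment of (\ref{6}) and (\ref{7}). For (\ref{6}), I would write $e^{iyk}=(ik)^{-1}\frac{d}{dy}e^{iyk}$ in (\ref{a-0k}) and integrate by parts in $y$. The absolute continuity of $u(t,\cdot)$ and of $\hat\phi$ legitimizes this, and the boundary term vanishes since $\lim_{|y|\to\infty}u(t,y)\hat\phi(y)=0$. Differentiating the product $u(t,y)\overline{\hat\phi(y)}$ gives two terms; majorizing by $|u_y'(t,y)|\leq|t|u_2(y)$ and $|u(t,y)|\leq u_1(y)$ and recognizing the constants $S_1,S_2$ delivers the bound $(S_1+S_2|t|)/|k|$. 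For (\ref{7}) I would combine both ideas: integrate by parts using $e^{iyl/2^j}=(il/2^j)^{-1}\frac{d}{dy}e^{iyl/2^j}$, the boundary term vanishing by $\lim_{|y|\to\infty}u(t,y)\overline{\hat\psi(y/2^j)}=0$. The derivative of the product splits into two pieces; in the piece containing $\hat\psi(y/2^j)$ I would reuse $|\hat\psi(y/2^j)|\leq C|y|2^{-j}$, while the piece containing $\hat\psi'(y/2^j)$ is controlled directly by $|\hat\psi'(y/2^j)|\leq C$. Collecting the powers of $2^j$ --- the $2^{-j/2}$ from the definition, the $2^j$ from the integration by parts, and the residual $2^{-j}$ --- leaves the single factor $2^{-j/2}$, and the two integrals assemble into $Q_1+Q_2|t|$.

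The main obstacle I anticipate is bookkeeping rather than conceptual. One must verify that the integration by parts is justified (absolute continuity plus the stated decay at infinity, together with the integrability conditions (\ref{u1-conds})--(\ref{u1-u2-conds}) guaranteeing convergence of all resulting integrals), and one must track the powers of $2^j$ in (\ref{7}) carefully so that they collapse to $2^{-j/2}$. I also note that the denominator in (\ref{7}) should read $|l|$ rather than $|k|$, since the oscillation that is integrated by parts is in the variable conjugate to the translation index $l$.
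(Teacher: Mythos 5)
Your proposal is correct, and it is essentially the argument the paper intends: the paper itself gives no details, stating only that the proof is ``analogous to the proof of similar inequalities'' in the cited reference \cite{Koz-Tur-IJSMS}, and that reference's argument is exactly the one you reconstruct --- direct majorization for $a_{00}$ and $b_{j0}$ (the latter via $\hat\psi(0)=0$, which holds here since $\hat\psi'$ exists so $\hat\psi$ is continuous at the origin, combined with $|\hat\psi'|\leq C$ to get $|\hat\psi(y/2^j)|\leq C|y|2^{-j}$), and one integration by parts in $y$ for $k\neq 0$, with the boundary terms killed by the stated decay hypotheses and the resulting integrals finite by (\ref{u1-conds})--(\ref{u1-u2-conds}). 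Your bookkeeping of the powers of $2^j$ in (\ref{7}) ($2^{j}$ from the integration by parts, $2^{-j/2}$ from the definition, $2^{-j}$ from either $|\hat\psi(y/2^j)|\leq C|y|2^{-j}$ or the chain rule) correctly collapses to $2^{-j/2}$ and reproduces $Q_1+Q_2|t|$, and you are also right that the denominator in (\ref{7}) should read $|l|$ with the condition $l\neq 0$; that is a typo in the statement.
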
 

  The proof of inequalities (\ref{9})--(\ref{7}) is 
  analogous to the proof of similar inequalities for the coefficients of 
expansion (\ref{rozkl1}) of a stationary process in \cite{Koz-Tur-IJSMS}.     
   

\begin{lem}
\label{nas-TN-subg-nestats-rozk1}
  Suppose that a  random process 
 $X=\{X(t),  t \in \mathbb{R} \}$ satisfies the conditions of  
  Theorem~\ref{teor-rozkl1}; a scaling function $\phi$ and the corresponding wave\-let~$\psi$ 
together with the process $X(t)$ satisfy the conditions of 
Lemma~\ref{p6lem-ocen-koef-rozkl1-nestats}, 
$C, Q_1, Q_2, S_1, S_2,  u_1(y)$  are defined in 
Lemma~\ref{p6lem-ocen-koef-rozkl1-nestats}, $T>0$, $p \geq 1,$ $\delta \in (0;1)$,  $\varepsilon>0$, 
$$
\delta_1=\min\left\{\frac{  \varepsilon^2 }
{  2T^{2/p} \ln(2/\delta)},\,\,\frac{  \varepsilon^2 }
{  p T^{2/p}} \right\}, \quad 
D=\frac{\textstyle{C}}{\textstyle{ \sqrt{2\pi} }} 
\int_{\mathbb{R}} u_{1}(y)|y|d y.
$$

   If 
$$
N_0> \frac{6}{\delta_1}(S_1+S_2 T)^2 + 1,
$$
$$
N>\max \left\{ 1 + \log_2 \left(\frac{72(Q_1+Q_2 T)^2}
{5 \delta_1} \right),
1 + \log_8 \left( \frac{18D^2}{7 \delta_1} \right) \right \},
$$
$$
M_j> 1 + \frac{12}{\delta_1}(Q_1+Q_2 T)^2  \left( 1-\frac{1}{2^{N}} \right),
$$
then 
$$
 \sup\limits_{t \in [0,T]} 
 \mathsf{E}|X(t)-\widehat{ X}(t)|^2\leq \delta_1. 
$$

%
\begin{proof}
We have 
$$ 
 \mathsf{E}|X(t)-\widehat{ X}(t)|^2 = 
 \sum_{k: |k|\geq N_0} |a_{0k}(t)|^2  +
\sum_{j=0}^{N-1} \sum_{l: |l|\geq M_j} |b_{jl}(t)|^2 +
\sum_{j=N}^{\infty} \sum_{l \in \mathbb{Z}} |b_{jl}(t)|^2.  
$$
It remains to apply inequalities (\ref{9})--(\ref{7}). 
\end{proof}
\end{lem}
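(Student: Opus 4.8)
The plan is to reduce everything to the coefficient estimates (\ref{9})--(\ref{7}) of Lemma~\ref{p6lem-ocen-koef-rozkl1-nestats}. Because $\{\xi_{0k},\eta_{jl}\}$ is an orthonormal system (Theorem~\ref{teor-rozkl1}), the mean-square error splits into three tail sums,
$$
\mathsf{E}|X(t)-\hat X(t)|^2 = \sum_{|k|\geq N_0}|a_{0k}(t)|^2 + \sum_{j=0}^{N-1}\sum_{|l|\geq M_j}|b_{jl}(t)|^2 + \sum_{j=N}^{\infty}\sum_{l\in\mathbb{Z}}|b_{jl}(t)|^2 ,
$$
and the whole argument consists of bounding each of the three sums so that it contributes at most $\delta_1/3$; throughout I would replace $|t|$ by $T$ to make the bounds uniform over $t\in[0,T]$. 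No probabilistic input beyond orthonormality is used, so this is essentially the computation behind Lemma~\ref{lem-ocen-koef}, now fed by (\ref{9})--(\ref{7}).

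For the first sum I would insert (\ref{6}), giving $|a_{0k}(t)|^2\leq (S_1+S_2T)^2/k^2$, and compare $\sum_{k\geq N_0}k^{-2}$ with $\int_{N_0-1}^{\infty}x^{-2}\,dx=(N_0-1)^{-1}$ to get the bound $2(S_1+S_2T)^2/(N_0-1)$; the hypothesis on $N_0$ drives this below $\delta_1/3$. The second sum is analogous: (\ref{7}) gives $|b_{jl}(t)|^2\leq (Q_1+Q_2T)^2/(2^j l^2)$, so the inner tail over $|l|\geq M_j$ is at most $2(Q_1+Q_2T)^2/(2^j(M_j-1))$; summing the geometric factor via $\sum_{j=0}^{N-1}2^{-j}=2(1-2^{-N})$ and substituting the lower bound for $M_j$, whose factor $(1-2^{-N})$ is built precisely to cancel this geometric sum, keeps the second term below $\delta_1/3$.

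The third sum is the delicate step and is where the two-branch lower bound on $N$ is needed, since the inner sum now runs over all $l\in\mathbb{Z}$ and must be split into $l=0$ and $l\neq 0$. For $l=0$ I would use the stronger estimate (\ref{8}), $|b_{j0}(t)|^2\leq D^2/8^j$, and sum $\sum_{j\geq N}8^{-j}=(7\cdot 8^{N-1})^{-1}$; the $\log_8$ branch of the $N$-condition then bounds this piece by $\delta_1/18$. For $l\neq 0$ I would again use (\ref{7}) together with the crude bound $\sum_{l\geq 1}l^{-2}\leq 2$ and the geometric sum $\sum_{j\geq N}2^{-j}=2^{-(N-1)}$, so that the piece is at most $4(Q_1+Q_2T)^2/2^{N-1}$; the $\log_2$ branch of the $N$-condition bounds it by $5\delta_1/18$. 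The two pieces add to exactly $\delta_1/3$.

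Summing the three estimates gives $\mathsf{E}|X(t)-\hat X(t)|^2<\delta_1$ uniformly in $t\in[0,T]$, which is the assertion. I expect the only real difficulty to be bookkeeping rather than ideas: to land on the stated constants $6,\,12,\,\tfrac{72}{5},\,\tfrac{18}{7}$ one must use the convenient bound $\sum_{l\geq 1}l^{-2}\leq 2$ (rather than the sharper $\pi^2/6$) and allocate the third tail as $\delta_1/18+5\delta_1/18$, so that all three tails equal $\delta_1/3$ and combine to $\delta_1$.
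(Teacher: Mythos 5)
Your proposal is correct and follows exactly the paper's own route: the paper's proof consists of the same orthonormality-based three-term decomposition of $\mathsf{E}|X(t)-\hat X(t)|^2$ followed by the remark that inequalities (\ref{9})--(\ref{7}) finish the job, and your text simply carries out the arithmetic (integral comparison of the tails, the cancellation of the factor $1-2^{-N}$, and the $\delta_1/18 + 5\delta_1/18$ split for the $j\geq N$ block) that the paper leaves implicit. The constants you recover match the hypotheses of the lemma, so the proposal is a faithful, fully detailed version of the paper's sketch.
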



\noindent{\bf Definition.} 
  We say that a  model $\hat Y(t)$
approximates a stochastic process~$Y(t)$ with given {\it accuracy} $\delta$
and {\it reliability} $1-\varepsilon$ (where $\varepsilon \in (0; 1)$)  in $L_p([0,T])$ if 
$$
\mathrm{P}\left\{\left(\int_{0}^T |Y(t) - \hat{Y}(t)|^p dt\right)^{1/p}>  \delta  \right\}
\leq \varepsilon.   
$$

\begin{thm}
\label{thm-T-N-v-Lp}
  Suppose that a random process 
  $Y=\{Y(t),  t \in \mathbb{R}\}$
  can be represented as  $Y(t)=\exp\{X(t)\}$, where a separable strictly sub-Gaussian random process 
$X=\{X(t),  t \in \mathbb{R}\}$ is mean square continuous, satisfies the condition RC and the conditions of
Lemma~\ref{nas-TN-subg-nestats-rozk1}  together with a  
  scaling function~$\phi$ and the corresponding  wave\-let $\psi$, the random variables
 $\xi_{0k}, \eta_{jl}$ in expansion~$(\ref{rozkl1})$ of the process $X(t)$ are independent strictly sub-Gaussian, $\hat X(t)$ is a model of  $X(t)$ defined by $(\ref{model-Xhat})$,    
 $\hat Y(t)$  is defined by $(\ref{model-Yhat})$,  
$D, Q_1, Q_2, S_1, S_2$ are defined in Lemmas~\ref{p6lem-ocen-koef-rozkl1-nestats} and \ref{nas-TN-subg-nestats-rozk1},   
 $\delta >0$, $\varepsilon \in (0;1)$,
 $p \geq 1$, $T>0$. 
%
%
%

Let 
$$
m= \frac{ \varepsilon  \delta^p } 
 {  2^{2p} (p/e)^{p/2} \,  
 T  \sup_{t \in [0,T]}  (\mathsf{E}\exp\{2p X(t)\})^{1/2}     }, 
$$
%
$$
h(t)=t^p (1+ \exp\{ 8 p^2 t^2 \})^{1/4},
\ t \geq 0, 
$$
$x_m$ be the root of the equation 
$$
h(x)=m. 
$$

    If 
\begin{equation}
\label{eq-bound-for-N0}
N_0> \frac{6}{x_m^2}(S_1+S_2 T)^2 + 1,
\end{equation}
\begin{equation}
\label{eq-bound-for-N}
N>\max \left\{ 1 + \log_2 \left(\frac{72(Q_1+Q_2 T)^2}
{5 x_m^2} \right), \,    
1 + \log_8 \left( \frac{18D^2}{7 x_m^2} \right) \right \},
\end{equation}
\begin{equation}
\label{eq-bound-for-Mj}
M_j> 1 + \frac{12}{x_m^2}(Q_1+Q_2 T)^2  \left( 1-\frac{1}{2^{N}} \right)
\quad  (j=0,1,\ldots,N-1),
\end{equation}
then 
the model $\hat Y(t)$ defined by $(\ref{model-Yhat})$ approximates $Y(t)$
with given accuracy~$\delta$ and reliabi\-lity~$1- \varepsilon$ in $L_p([0,T])$.  
\end{thm}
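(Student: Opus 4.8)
The overall strategy is to pass from the $L_p([0,T])$-norm statement to a pointwise (in $t$) moment bound by Chebyshev's inequality together with the Tonelli theorem, and then to control that moment through two applications of the Cauchy--Schwarz inequality, exactly as in the $C([0,T])$ case but with the exponents adapted to $p$. First I would set $\Delta X(t) = X(t) - \hat X(t)$ and $\tau_{\Delta X} = \sup_{t\in[0,T]}\tau(\Delta X(t))$, and reduce the claim to a bound on $\sup_t \mathsf{E}|Y(t)-\hat Y(t)|^p$. Writing $Z = \int_0^T|Y(t)-\hat Y(t)|^p dt$, Chebyshev's inequality gives $\mathrm{P}\{Z^{1/p}>\delta\} = \mathrm{P}\{Z>\delta^p\}\le \delta^{-p}\mathsf{E} Z$, and since the integrand is nonnegative, Tonelli's theorem yields $\mathsf{E} Z = \int_0^T \mathsf{E}|Y(t)-\hat Y(t)|^p dt \le T\sup_{t}\mathsf{E}|Y(t)-\hat Y(t)|^p$. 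Thus it suffices to show $\sup_t\mathsf{E}|Y(t)-\hat Y(t)|^p \le 2^{2p}(p/e)^{p/2}\sup_t(\mathsf{E}\exp\{2pX(t)\})^{1/2}\,m$, since substituting the definition of $m$ then collapses the bound to $\mathsf{E} Z\le\varepsilon\delta^p$.

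Next I would establish the pointwise estimate. Using (\ref{razn-exp}) with $a=X(t)$, $b=\hat X(t)$ and factoring $e^{X}+e^{\hat X} = e^{X}(1+e^{-\Delta X})$, one gets $|Y(t)-\hat Y(t)|^p \le |\Delta X(t)|^p e^{pX(t)}(1+e^{-\Delta X(t)})^p$. Applying Cauchy--Schwarz to separate $e^{pX(t)}$ from the remaining factor produces $(\mathsf{E}\exp\{2pX(t)\})^{1/2}$ with the correct exponent, leaving $(\mathsf{E}[|\Delta X(t)|^{2p}(1+e^{-\Delta X(t)})^{2p}])^{1/2}$; a second Cauchy--Schwarz splits this into $(\mathsf{E}|\Delta X(t)|^{4p})^{1/4}(\mathsf{E}(1+e^{-\Delta X(t)})^{4p})^{1/4}$. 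I estimate the first factor by (\ref{ots-mom-subg}) with exponent $4p$, and the second by the convexity bound $(1+a)^{4p}\le 2^{4p-1}(1+a^{4p})$ together with (\ref{oc-expon-mom}); here $\mathsf{E}\exp\{-4p\Delta X(t)\}\le\exp\{(4p)^2\tau^2(\Delta X(t))/2\}=\exp\{8p^2\tau^2(\Delta X(t))\}$, which is exactly where the $8p^2$ in $h$ originates. Collecting powers of $2$ gives $\mathsf{E}|Y(t)-\hat Y(t)|^p \le 2^{2p}(p/e)^{p/2}(\mathsf{E}\exp\{2pX(t)\})^{1/2}h(\tau(\Delta X(t)))$, and replacing $\tau(\Delta X(t))$ by $\tau_{\Delta X}$ (legitimate since $h$ is increasing on $[0,\infty)$) uniformizes the bound in $t$.

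Finally I would connect $\tau_{\Delta X}$ with $x_m$. Because $X$ is strictly sub-Gaussian and the $\xi_{0k},\eta_{jl}$ are independent strictly sub-Gaussian, each $\Delta X(t)$ is again strictly sub-Gaussian, so $\tau^2(\Delta X(t))=\mathsf{E}(\Delta X(t))^2$; hence conditions (\ref{eq-bound-for-N0})--(\ref{eq-bound-for-Mj}), which are precisely the hypotheses of Lemma~\ref{nas-TN-subg-nestats-rozk1} with $\delta_1$ replaced by $x_m^2$, give $\tau_{\Delta X}^2=\sup_t\mathsf{E}|\Delta X(t)|^2\le x_m^2$, i.e. $\tau_{\Delta X}\le x_m$. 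Since $h$ is continuous and strictly increasing on $[0,\infty)$ with $h(0)=0$ and $h(t)\to\infty$, the root $x_m$ of $h(x)=m$ is well defined and $h(\tau_{\Delta X})\le h(x_m)=m$. Substituting this into the uniform bound and using the definition of $m$ collapses all constants, yielding $\mathsf{E} Z\le\varepsilon\delta^p$ and hence $\mathrm{P}\{Z^{1/p}>\delta\}\le\varepsilon$.

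I expect the main obstacle to be the bookkeeping in the two Cauchy--Schwarz steps: the splitting must be arranged so that exactly $(\mathsf{E}\exp\{2pX(t)\})^{1/2}$ survives (rather than a fourth power of $X$), while the fourth-order quantities $\mathsf{E}|\Delta X(t)|^{4p}$ and $\mathsf{E}\exp\{-4p\Delta X(t)\}$ generate precisely the constant $2^{2p}(p/e)^{p/2}$ and the exponent $8p^2$ appearing in $m$ and $h$. The remaining analytic points --- monotonicity and invertibility of $h$, and the translation of the $L_2$-estimate of Lemma~\ref{nas-TN-subg-nestats-rozk1} into the bound $\tau_{\Delta X}\le x_m$ via strict sub-Gaussianity --- are routine.
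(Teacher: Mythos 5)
Your proof is correct and follows essentially the same route as the paper's: Chebyshev plus Fubini/Tonelli to reduce to a pointwise moment bound, two applications of Cauchy--Schwarz isolating $(\mathsf{E}\exp\{2pX(t)\})^{1/2}$, the moment bound (\ref{ots-mom-subg}) with exponent $4p$ and the exponential bound (\ref{oc-expon-mom}) giving the $8p^2$ term, the power-mean bound $2^{4p-1}$, and finally Lemma~\ref{nas-TN-subg-nestats-rozk1} applied with $\delta_1=x_m^2$ together with strict sub-Gaussianity and monotonicity of $h$. The only (immaterial) differences are the sign convention for $\Delta X$ and that you apply the elementary inequality $|e^a-e^b|\le|a-b|(e^a+e^b)$ before rather than after the first Cauchy--Schwarz step; the constants collapse identically.
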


\begin{proof}
We will use the following notations: 
$$
\Delta X(t) = \hat X(t) - X(t), 
$$
$$
\overline{\tau}_X = 
\sup_{t \in [0,T]} \tau(X(t)),
$$
$$ 
\overline{\tau}_{\Delta X} = 
\sup_{t \in [0,T]} \tau(\Delta X(t)),  
$$
$$
c_p=2 (4p/e)^{2p}. 
$$
We will denote the norm in $L_p([0,T])$ as $\|\cdot\|_p$.  

Let us estimate $\mathrm{P} \{\|Y - \hat Y \|_p >\delta \}$. 
We have 
$$
\mathrm{P} \{\|Y - \hat Y \|_p >\delta \} \leq
\frac{\mathsf{E} \|Y - \hat Y \|_p^p}{\delta^p}  
$$
\begin{equation}
\label{oc-ner-Chebysh}
=  \frac{\displaystyle \mathsf{E}\int_{0}^{T} |\exp\{X(t)\}  -  \exp\{\hat X(t)\} |^p dt}
 {\delta^p}. 
\end{equation}

Denote 
$$
\Delta(t)= \mathsf{E}|\exp\{X(t)\}  -  \exp\{\hat X(t)\} |^p. 
$$

An application of Cauchy-Schwarz inequality
yields: 
$$
\Delta(t) = \mathsf{E}  \exp\{p X(t)\} |1- \exp\{\Delta X(t)\}|^{p}
$$
\begin{equation}
\label{ocen-DeltaE}
 \leq 
\left(\mathsf{E} \exp\{2p X(t)\}\right)^{1/2} 
\left(\mathsf{E} |1- \exp\{\Delta X(t)\}|^{2p} \right)^{1/2} . 
\end{equation}

We will need two auxiliary inequalities. 
Using the power mean inequality 
$$
\frac{a+b}{2} \leq 
\left(\frac{a^r + b^r}{2}\right)^{1/r}, 
$$
where $r \geq 1$, 
and setting $a=e^c$, $b=1$  we obtain 
\begin{equation}
\label{ots-sredn-step}
(e^c + 1)^r \leq 2^{r-1} (e^{cr} + 1). 
\end{equation}

It follows from (\ref{razn-exp})
that  
\begin{equation}
\label{oc-s-exp}
|e^a - 1|^q \leq |a|^q (e^a + 1)^q
\end{equation} 
for $q \geq 0$. 



Now let us estimate 
$ \mathsf{E} |1- \exp\{\Delta X(t)\}|^{2p} $, where  $t \in [0,T]$,    using (\ref{oc-s-exp}): 
$$
\mathsf{E} |1- \exp\{\Delta X(t)\}|^{2p} 
\leq  \mathsf{E} |\Delta X(t)|^{2p} (1 + \exp\{\Delta X(t)\})^{2p} 
$$
\begin{equation}
\label{oc-razn-power-2p}
\leq   \left(\mathsf{E} |\Delta X(t)|^{4p}\right)^{1/2}  
\left(\mathsf{E} (1 + \exp\{\Delta X(t)\})^{4p}\right)^{1/2}.   
\end{equation}

Applying (\ref{ots-sredn-step}) we obtain: 
\begin{equation}
\label{ots-E-ot-binoma}
\mathsf{E} (1 + \exp\{\Delta X(t)\})^{4p} 
\leq 
2^{4p-1} \mathsf{E}(\exp\{4p \Delta X(t)\}  + 1).
\end{equation}

It follows from (\ref{ocen-DeltaE}), 
(\ref{oc-razn-power-2p}) and (\ref{ots-E-ot-binoma})   
that for $t \in [0,T]$ 
\begin{equation}
\label{poluitog-ots-Delta}
\Delta(t) \leq 
2^{p-1/4} \left(\mathsf{E}\exp\{2p X(t)\}\right)^{1/2}
\left(\mathsf{E} |\Delta X(t)|^{4p} \right)^{1/4}
(1 + \mathsf{E}\exp\{4p \Delta X(t)\})^{1/4}. 
\end{equation}

Since for $t \in [0,T]$   
$$
\mathsf{E}|\Delta X(t)|^{4p} \leq
c_p \overline{\tau}_{\Delta X}^{4p}
$$
 (see (\ref{ots-mom-subg})) 
and 
$$
\mathsf{E}\exp\{4p \Delta X(t)\} \leq
\exp\{8 p^2 \overline{\tau}_{\Delta X}^2 \}
$$ 
 (see (\ref{oc-expon-mom})) 
we have 
\begin{equation}
\label{oc-Delta-t-final}
\Delta(t) \leq 
2^{p-1/4} c_p^{1/4} \sup_{t \in [0,T]} \left(\mathsf{E}\exp\{2p X(t)\}\right)^{1/2}
h(\overline{\tau}_{\Delta X}), \ t \in [0,T].  
\end{equation}

It follows from 
Lemma~\ref{nas-TN-subg-nestats-rozk1}
and inequalities 
(\ref{eq-bound-for-N0})--(\ref{eq-bound-for-Mj})
 that 
$$
\overline{\tau}_{\Delta X}
= \sup_{t \in [0,T]} (\mathsf{E}(X(t)- \hat X(t))^2)^{1/2}
\leq x_m. 
$$ 
We obtain using (\ref{oc-Delta-t-final})
that 
$$
\Delta(t) \leq  \varepsilon \delta^p/T, \quad t\in [0,T], 
$$
and hence 
\begin{equation}
\label{oc-E-otkl}  
\mathsf{E} \| Y - \hat Y \|_p^p = \int_{0}^{T} \Delta(t) dt 
 \leq  \varepsilon \delta^p .
\end{equation}
Now the statement of the theorem follows from (\ref{oc-ner-Chebysh}) and (\ref{oc-E-otkl}).   
\end{proof}

\begin{ex} {\rm 
  Let us consider a centered Gaussian process  
$X(t)$ with the correlation function 
$$
R(t,s) = \int_{\mathbb{R}} u(t,y) u(s,y)dy,
$$
where 
$$
u(t,y)= \frac{t}{1+ t^2 + \exp\{y^2 \}},
$$
and an arbitrary Battle-Lemari\'e wavelet.  
It is easy to check that the process $Y(t)=\exp\{X(t)\}$ and the Battle-Lemari\'e wavelet satisfy the conditions of Theorem~\ref{thm-T-N-v-Lp}.
}
\end{ex}

 \section*{Acknowledgments} 
  
   The author's research   was supported by  a Swiss Government Excellence  Scholarship.     The author would like to thank  professors Enkelejd Hashorva and Yuriy V. Kozachenko for valuable discussions.

\end{document}